\newtheorem{theorem}{Theorem}[section]
\newtheorem{lemma}[theorem]{Lemma}
\newtheorem{corollary}[theorem]{Corollary}
\newtheorem{claim}[theorem]{Claim}
\newtheorem{proposition}[theorem]{Proposition}
\newtheorem{example}[theorem]{Example}
\theoremstyle{definition}
\numberwithin{equation}{section}
\newcommand{\N}{\mathbb{N}}
\newcommand{\BB}{\widetilde{B}}
\newcommand{\ideal}{\mathcal{I}}
\def\cc{{\mathcal C}}
\def\ideal{{\mathcal I}}
\def\baire{{\N}^{\N}}
\def\fin{\mbox{\sf Fin}}
\def\cantor{2^{\N}}
\def\pp{$p^+$}
\def\qq{$q^+\,$}
\def\su{\subseteq}
\begin{document}
\date{}
\title{Some topological and combinatorial properties  preserved by  inverse limits}         % Enter your title between curly braces
\author{Javier Camargo}
\address{Javier Camargo\\
\newline
Escuela de Matem\'aticas, Facultad de Ciencias, Universidad Industrial de
Santander, Ciudad Universitaria, Carrera 27 Calle 9, Bucaramanga,
Santander, A.A. 678, COLOMBIA.}
\email{ jcamargo@saber.uis.edu.co}
%\thanks{The authors thank La Vicerrector\'ia de Investigaci\'on y Extensi\'on de la Universidad Industrial de Santander for the financial %support for this work,  which is part  of the VIE project  \#. {\red Programa de Movilidad}}
%    \thanks will become a 1st page footnote.

\author{Carlos Uzc\'ategui}
\address{Carlos Uzc\'ategui\\
\newline 
Escuela de Matem\'aticas, Facultad de Ciencias, Universidad Industrial de
Santander, Ciudad Universitaria, Carrera 27 Calle 9, Bucaramanga,
Santander, A.A. 678, COLOMBIA\\
\newline
\newline
Centro Interdisciplinario de L\'ogica y \'Algebra, Facultad de Ciencias, Universidad de Los Andes, M\'erida, VENEZUELA.}
\email{ cuzcatea@saber.uis.edu.co}

\subjclass[2010]{Primary: 54D55, 54B10.  Secondary:  54D65, 54H05.}

\keywords{Countable fan-tightness, \qq ideals, inverse limits, selective separability, discrete generation, analytic topologies}

\begin{abstract}
We show that  the following properties are preserved under inverse limits: countable fan-tightness, \qq, discrete generation and selective separability. We also present several examples based on inverse limits of countable spaces. 
\end{abstract}

\maketitle

\section{Introduction}
A space $X$ is Fr\'echet if for all $A\su X$ and $x\in \overline A$  there is a sequence in $A$ converging to $x$. It is well known that this property is not productive. A stronger form of Fr\'echetness, which is productive, is bisequentiality. In  the class of countable spaces,  bisequentiality is closed related to two properties of combinatorial nature.
A space $X$ has {\em countable fan-tightness} \cite{ArhanBella1996}, if for all $x$ and all sequence of subsets $A_n\su X$ with $x\in \overline{A_n}$, there is $K_n\su A_n$ finite, for each $n\in\N$, such that $x\in \overline{\bigcup_n K_n}$. $X$ is a  \qq space,  if for  all $A\su X$, all $x\in \overline{A}$ and any partition $(F_n)_n$ of $A$ into finite sets, there is $S$ such that $S\su A$, $x\in \overline{S}$ and $S\cap F_n$  has at most one element for each $n$.  Every countable sequential space if \qq\ (see \cite[proposition 3.3]{Todoruzca2000}). Moreover, a countable regular space  with analytic topology (see the definition in the next section) is bisequential iff it has countable fan-tightness   and is \qq (see \cite[Theorem 7.65]{Todor2010}).  

Under CH there is a pair of Fr\'echet spaces with countable fan-tightness whose product  is Fr\'echet but does not have countable fan-tightness \cite{Simon98}.  However, there are not such examples with analytic topology (\cite{Todoruzca2000}). We do not know whether \qq is productive, nevertheless, we will show that the product of a countable metric space and a \qq space is again \qq. 

It is known that the inverse limit of Fr\'echet spaces might not be even sequential (see for instance Example \ref{ejem-nosequential}).  Nogura \cite{Nogura85,Nogura85b} studied when the inverse limit of Fr\'echet spaces is also Fr\'echet. We continue this line of investigation by studying some  other properties of  countable spaces which are preserved under inverse limits. We focus on countable fan-tightness,  \qq, discrete generation and selective separability.   We recall that $X$ is {\em discretely generated} \cite{DTTW2002}, denoted DG,  if 	for every $A\su X$ and $x\in\overline{A}$, there is $E\su A$ discrete such that $x\in \overline{E}$. $X$ is {\em selectively separable} \cite{Scheeper99}, denoted $SS$,  if for any sequence $(D_n)_n$ of dense subsets of $X$ there is $K_n\subseteq D_n$ finite such that $\bigcup_n K_n$ is dense in $X$.   
DG and SS are not productive (see \cite{murtinova2006} and \cite{BarmanDow2011} respectively).  
Countable fan-tightness implies SS \cite{Bella_et_al2008} and also DG \cite{BellaSimon2004} but it does not imply property \qq (see Example \ref{ejemplo2}).  

We recall that an heredirary class of spaces is closed under countable products iff it is closed under finite products and inverse limits.    Countable fan-tightness, DG and \qq are hereditary properties, but SS is not.  We will show that all those four properties are preserved under inverse limits.   The proof for  \qq is presented for countable spaces with analytic topology.  Our original motivation was  to construct countable crowded spaces with analytic topology satisfying some of those  properties.  For instance, we construct a countable crowded space with analytic topology which is DG, SS and   \qq, but does not have countable fan-tightness (see Example \ref{ejem-lastres}).

%SS, DG  and \qq  are mutually  independent. For instance, Arhangelskii-Franklin's space $S_\omega$ is not SS but is is DG and \qq. On the other hand,  it is consistent that there is a maximal space which is SS \cite{BarmanDow2011}, thus SS does not imply DG.   

\section{Preliminaries}
An {\em ideal} on a set $X$ is a nonempty collection $\ideal$ of subsets of
$X$ satisfying: (i) $A\su B$ and $B\in \ideal$, then $A\in \ideal$ and  (ii) If $A, B\in \ideal$, then $A\cup B \in \ideal$.
The ideal is not trivial if $X\not\in \ideal$ and when every finite subset of $X$ belongs to $\ideal$ the ideal is called free. 
We will assume that all ideals are free.  
If $A\su X$, then $\ideal\restriction A$ is the ideal on $A$ given by $\{B\su
A:\; B\in \ideal\}$.  Notice that $\ideal\restriction A$ is not trivial 
only when $A\not\in \ideal$.  We denote by $\fin$  the ideal of finite subsets of $\N$.   Let   $X$  be a topological space and $x\in X$ non isolated. The neighborhood ideal of $x$ is defined as follows:
\begin{equation}
\label{Idealx}
\ideal_x=\{A\su  X:\; x\not\in \overline{A\setminus\{x\}}\}.
\end{equation}
A ideal $\ideal$ is \pp, if for every decreasing sequence $(A_n)_n$ of sets not in
$\ideal$, there is $A\not\in \ideal$ such that $A\su^* A_n$ for all
$n\in\N$ (where $A\su^*B$ means that  $A\setminus B$ is finite). It is easy to check that $X$ has countable fan-tightness iff  $\ideal_x$ is \pp\ for all $x\in X$.  
	
The notion of a \qq space can be expressed as a property of the ideals $\ideal_x$. An ideal $\ideal$ over a countable set is \qq, if for every $A\not\in \ideal$ and every partition $(F_n)_n$ of $A$ into finite sets, there is $S\not\in\ideal$
such that $S\su A$ and $S\cap F_n$  has at most one element for each $n$.   Thus $X$ is a \qq space, if $\ideal_x$ is \qq for all $x$.  This notion was motivated by Ramsey theoretic properties of ideals (see \cite{HMTU2013, Todor2010}). 

We use \cite{Kechris94} as a general reference for all descriptive set theoretic notions. A Polish space is a completely metrizable and separable space.  A subset $A\su X$ of a Polish space $X$ is called {\em analytic}, if it is a
continuous image of a Polish space. Equivalently, if there is a
continuous function $f:\baire\rightarrow X$ with range $A$, where
$\baire$ is the space of irrationals. Every Borel subset of a Polish space is analytic. We say
that a topology $\tau$ over a countable set $X$ is {\em analytic},
if $\tau$ is analytic as a subset of the Cantor cube $2^X=\{0,1\}^{X}$
(identifying subsets of $X$ with characteristic functions). A regular countable space has analytic topology iff it can be embedded in $C_p(\baire)$   (see \cite{todoruzca, Todoruzca2000,Todoruzca2014} for more information about analytic topologies). If there is a base
$\mathcal{B}$ of $X$ such that $\mathcal{B}$ is a $F_\sigma$ subset of
$2^X$, then we say that $X$ has a $F_\sigma$ base.
Typical examples of spaces with analytic topology are  the countable sequential fan and   Arens' space.   A space is {\em crowded} if does not have isolated points.

Given a sequence of spaces $X_n$, $n\in \N$ and continuous functions $f^{n+1}_n: X_{n+1}\rightarrow X_n$, the {\em inverse limit} is the following subspace of the product $\prod_n X_n$:
\[
X_\infty=\{(x_n)_n\in \prod_n X_n:\; x_{n}=f^{n+1}_n(x_{n+1})\; \mbox{for all $n\in\N$} \},
\]
usually denoted by $\underleftarrow{\lim}\{X_n;f_n^{n+1}\}_{n\in\N}$. Let $X$ be a space and  $f\colon X\to X$ a continuous map. When  $X_n=X$ and $f_n^{n+1}=f$, for each $n\in \N$, then the inverse limit is denoted by $\underleftarrow{\lim}\{X;f\}$.
The projection functions $\pi_n:X_\infty\rightarrow X_n$ are defined by $\pi_n(x_m)_m=x_n$. We denote $f_{n}^m=f_{m-1}^m\circ f_{m-2}^{m-1}\circ ... \circ f_n^{n+1}$ whenever $m>n$.
The following is a basic fact that will be used several times  (see e.g. \cite[Proposition 2.5.5]{engelking}).

\begin{proposition}
	\label{baseIL}
	Given an inverse sequence $\{X_n ; f^{n+1}_n\}_{n\in\N}$, the family of subsets  $\pi^{-1}_n(V)$, for $V\su X_n$ open and $n\in \N$, is  a base for $X_\infty$.
\end{proposition}

\begin{proposition}
	\label{inverseAnalytic}
	Let  $\{X_n ; f^{n+1}_n\}_{n\in\N}$ be an inverse sequence of countable  spaces with  analytic topology. Then the topology of any countable subset of $X_\infty $  is analytic. Moreover, if each $X_n$ has a $F_\sigma$ base, then every countable subset of $X_\infty$ also has a $F_\sigma$ base. 
\end{proposition}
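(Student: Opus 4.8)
The plan is to reduce both assertions to the behaviour of a single family of ``dual'' maps. Fix a countable $Y\su X_\infty$. For each $n\in\N$ I would define $\phi_n\colon 2^{X_n}\to 2^Y$ by
\[
\phi_n(V)=\{y\in Y:\; \pi_n(y)\in V\},
\]
so that, reading subsets as characteristic functions, $\phi_n(V)(y)=V(\pi_n(y))$. Since each output coordinate (indexed by $y\in Y$) depends only on the single input coordinate $\pi_n(y)$, the map $\phi_n$ is continuous; it is the map dual to $\pi_n\restriction Y\colon Y\to X_n$. Because $\phi_n(V)=Y\cap\pi_n^{-1}(V)$, Proposition \ref{baseIL} gives that $\mathcal B_Y:=\bigcup_{n}\phi_n(\tau_n)$ is a base for the subspace topology $\tau_Y$ of $Y$. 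The same argument (refining a basic set $\pi_n^{-1}(V')$ inside $X_n$ refines the induced set on $Y$) shows that if each $\mathcal B_n$ is merely a base of $X_n$ rather than all of $\tau_n$, then $\bigcup_n\phi_n(\mathcal B_n)$ is still a base of $\tau_Y$.

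For the first assertion I would proceed in two steps. First, $\mathcal B_Y$ is analytic: each $\tau_n$ is analytic in $2^{X_n}$, $\phi_n$ is continuous, hence $\phi_n(\tau_n)$ is analytic, and a countable union of analytic sets is analytic. Second, I would establish the general fact that a countable space with an analytic base has analytic topology. This is a matter of tracking quantifiers: $U\in 2^Y$ is open iff
\[
\forall y\,\big(y\in U\;\rightarrow\;\exists B\,(B\in\mathcal B_Y\wedge y\in B\wedge B\su U)\big).
\]
The set $\{(U,B):\,B\in\mathcal B_Y,\ B\su U\}$ is analytic (the relation $B\su U$ is closed), so intersecting with the clopen condition $y\in B$ and projecting out $B$ shows that, for each fixed $y$, the set $R_y=\{U:\exists B\in\mathcal B_Y\ (y\in B\su U)\}$ is analytic. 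Hence $\tau_Y=\bigcap_{y\in Y}\big(\{U:\,y\notin U\}\cup R_y\big)$ is a countable intersection of analytic sets, and therefore analytic.

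For the ``moreover'' part the crucial point is that each $2^{X_n}$ is \emph{compact}. Assuming $\mathcal B_n$ is an $F_\sigma$ base of $X_n$, I would write $\mathcal B_n=\bigcup_k C^n_k$ with each $C^n_k$ closed in $2^{X_n}$, hence compact. Then each $\phi_n(C^n_k)$ is compact, so closed in $2^Y$, whence $\phi_n(\mathcal B_n)=\bigcup_k\phi_n(C^n_k)$ is $F_\sigma$; consequently the base $\bigcup_n\phi_n(\mathcal B_n)$ of $\tau_Y$ is a countable union of $F_\sigma$ sets and is itself $F_\sigma$.

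I expect the only genuinely delicate point to be this last compactness observation. Continuity of $\phi_n$ by itself would only yield that $\phi_n(\mathcal B_n)$ is analytic, which is too weak; it is precisely the compactness of the Cantor cubes $2^{X_n}$ that makes $\phi_n$ a closed map and thereby upgrades ``analytic image'' to ``$F_\sigma$ image''. The passage from an analytic base to an analytic topology in the first part is routine, but must be carried out with care about the order of the quantifiers.
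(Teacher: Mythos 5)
Your proposal is correct and takes essentially the same approach as the paper: your dual maps $\phi_n$ are exactly the paper's maps $F_n(V)=\pi_n^{-1}(V)\cap Y$, and the analytic base $\bigcup_n\phi_n(\tau_n)$ is the same one the paper uses. The only differences are matters of detail rather than route: you prove inline the fact that a countable space with an analytic base has analytic topology, where the paper simply cites \cite[Proposition 3.2]{todoruzca}, and you spell out the compactness-of-$2^{X_n}$ argument that upgrades the images $\phi_n(\mathcal{B}_n)$ from analytic to $F_\sigma$, a point the paper states without proof.
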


\proof  Let $\tau_n$ be the topology on $X_n$, for $n\in \N$.   Let $Y\su X_\infty$ be countable. For each $n\in\N$, consider the function $F_n: 2^{X_n}\rightarrow  2^Y$ given by 
\[
F_n(V)=\pi^{-1}_n(V)\cap Y= \{(y_k)_k\in Y: \; y_n \in V\}, \; \;\mbox{for $V\su X_n$}.
\]
Then each $F_n$ is continuous. Since we are assuming that $\tau_n$ is an analytic subset of $2^{X_n}$, then $F_n[\tau_n]$ is also analytic. Thus $\bigcup_n F_n[\tau_n]$ is an analytic  base for the topology of $Y$. Therefore the topology of $Y$ is also analytic (see \cite[Proposition 3.2]{todoruzca}).
Notice that if $\mathcal{B}_n$ is a $F_\sigma$ base for $\tau_n$, then $\bigcup_n F_n[\mathcal{B}_n]$ is $F_\sigma$ base for $X_\infty$. 
\endproof

\section{Property \qq}

As we said in the introduction, we do not know if property \qq is productive. In this section we look at that issue.  We only treat the case when the spaces have analytic topolgy, since we need to use the following theorem.

\begin{theorem}
\label{jalali}
(Jalali-Naini, Talagrand \cite[Theorem 1, pag 32]{Todor97})
Let $\ideal$ be an ideal   over a countable set  $X$ containing all finite subsets of $X$.  Suppose $\ideal$  has the Baire property as a subset of $2^X$. Then there is a partition $(K_n)_n$ of $X$  into finite sets such that $\bigcup_{i\in A}  K_i \not\in\ideal$ for all  infinite $A\subseteq X$.  
\end{theorem}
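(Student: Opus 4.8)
The plan is to reduce the hypothesis to meagerness and then extract the partition from an increasing sequence of \emph{hereditary} closed nowhere dense sets covering $\ideal$.

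First I would observe that, since $\ideal$ is an ideal, it is a subgroup of the topological group $(2^X,\triangle)$ under symmetric difference: if $A,B\in\ideal$ then $A\triangle B\su A\cup B\in\ideal$. A subgroup with the Baire property is, by the Pettis/Banach--Kuratowski theorem, either meager or clopen. A clopen subset of $2^X$ depends on finitely many coordinates, so a clopen subgroup has the form $\{Y:Y\cap F\in H\}$ for some finite $F\su X$ and some subgroup $H\le 2^F$; since $\ideal$ contains every singleton, such an $H$ must be all of $2^F$, forcing $\ideal=2^X$ and contradicting $X\notin\ideal$. Hence $\ideal$ is meager, and I may write $\ideal\su\bigcup_n C_n$ with $C_n$ closed, nowhere dense and increasing.

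Next I would build the partition by recursion on intervals $K_n=[N_n,N_{n+1})$, with the following target: for every $n$ and every ``past'' $p$ which is a union of the previously chosen full blocks, the clopen set determined by ``$p$ on $[0,N_n)$ and all of $K_n$ present'' is disjoint from $C_n$. Granting this, any $Y=\bigcup_{i\in A}K_i$ with $n\in A$ avoids $C_n$; and since the $C_n$ increase, an infinite $A$ forces $Y\notin C_m$ for every $m$ (given $m$, pick $n\in A$ with $n\ge m$ and use $C_m\su C_n$), whence $Y\notin\bigcup_m C_m\supseteq\ideal$.

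The main obstacle is producing full (``all-ones'') escape blocks: nowhere density of $C_n$ only yields, for each finite past condition, a finite \emph{exact} extension, specifying both presences and absences, that avoids $C_n$; filling its absences with presences need not preserve the escape for an arbitrary $C_n$. I would resolve this by first replacing the $C_n$ with \emph{hereditary} closed nowhere dense sets. Here I use that $\ideal$ is hereditary and, being proper, contains no cofinite set; passing to the (compact, hence closed) downward closures and discarding their cofinite members, one checks that the cover may be taken hereditary and nowhere dense. For a hereditary, closed, proper $D$ the key elementary fact is that some finite set $e$ lies outside $D$, and then every superset of $e$ also lies outside $D$; moreover, by nowhere density such an $e$ can be found above any prescribed level $N_n$. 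Taking $K_n\supseteq e$ then delivers exactly the full-block escape required, and carrying this out simultaneously for every past pattern (finitely many at each stage) completes the recursion. I expect the delicate point to be precisely this reduction to hereditary nowhere dense sets, since it is what converts the exact-pattern escapes of plain nowhere density into the all-ones escapes that the ``$\bigcup_{i\in A}K_i$'' formulation demands.
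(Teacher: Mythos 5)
A preliminary remark: the paper itself does not prove this theorem --- it quotes it from Todor\v{c}evi\'c's book --- so I am comparing your argument with the standard proof. Your first half is correct and complete: $\mathcal{I}$ is a subgroup of $(2^X,\triangle)$, a non-meager subgroup with the Baire property would be open by Pettis' theorem and hence clopen, a clopen subgroup depends on finitely many coordinates and, since $\mathcal{I}$ contains all finite sets, would have to be all of $2^X$, contradicting properness (as you implicitly do, one must read the hypothesis as including $X\notin\mathcal{I}$, without which the statement is false). So $\mathcal{I}$ is meager. Moreover, \emph{granted} your key claim --- that the cover can be taken to consist of hereditary closed nowhere dense sets $D_n$ --- the rest of your plan works, and in fact more simply than you describe: a hereditary nowhere dense set contains no cofinite set, so the point $[N_n,\infty)$ has a basic clopen neighborhood disjoint from the closed set $D_n$, yielding a finite $e_n\subseteq[N_n,\infty)$ with $e_n\notin D_n$; by heredity \emph{every} superset of $e_n$ escapes $D_n$, so you may take $K_n\supseteq e_n$ and never need to track past patterns at all.

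The genuine gap is the key claim itself, exactly at the point you flagged as delicate: your recipe for producing hereditary covers fails. Take $x_k=[0,k]\cup\{2^j:\ j>k\}$ in $2^{\mathbb{N}}$ and $C=\{x_k:\ k\in\mathbb{N}\}\cup\{\mathbb{N}\}$, a convergent sequence together with its limit: $C$ is closed and nowhere dense, each $x_k$ is non-cofinite, and nothing prevents $C$ from being one of the $C_n$ (the $x_k$ even lie in $\mathcal{I}$ when $\mathcal{I}$ is, say, the density-zero ideal, so you cannot evade $C$ by first intersecting the cover with $\mathcal{I}$). Every finite set is contained in some $x_k$, so after discarding the cofinite member $\mathbb{N}$ and downward-closing you obtain a hereditary set containing all finite sets: it is dense in $2^{\mathbb{N}}$, hence not nowhere dense, and it admits no finite ``all-ones'' escape block whatsoever, which is precisely what your recursion needs. (In the other order, the downward closure of $C$ is already all of $2^{\mathbb{N}}$; and deleting the cofinite members of a closed set destroys closedness, while nowhere density is a property of the closure.) The obstruction is essential, not a fixable bookkeeping issue: any hereditary set containing every $x_k$ contains all finite sets and is therefore somewhere dense, so in any hereditary nowhere dense cover of $\mathcal{I}$ the points $x_k$ must be spread over infinitely many members of the cover --- something no set-by-set processing of a given cover $(C_n)_n$ can accomplish. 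Indeed, the claim ``every meager proper ideal containing the finite sets is covered by countably many hereditary closed nowhere dense sets'' is essentially equivalent to the theorem being proved (given the theorem, take $D_m=\{x:\ K_n\not\subseteq x\ \mbox{for all}\ n\geq m\}$). The standard proof sidesteps all of this by leaving the cover alone and using heredity of $\mathcal{I}$ instead, at the very end: plain nowhere density of the increasing $C_n$ yields intervals $K_n=[N_n,N_{n+1})$ and \emph{exact} patterns $\sigma_n\subseteq K_n$ such that for every $s\subseteq[0,N_n)$ the clopen set $\{Y:\ Y\cap[0,N_{n+1})=s\cup\sigma_n\}$ misses $C_n$; for infinite $A$ the set $Y_A=\bigcup_{i\in A}\sigma_i$ realizes the pattern $\sigma_n$ for every $n\in A$, hence avoids every $C_m$, hence $Y_A\notin\mathcal{I}$; and since $Y_A\subseteq\bigcup_{i\in A}K_i$ and $\mathcal{I}$ is hereditary, $\bigcup_{i\in A}K_i\notin\mathcal{I}$. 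That one-line use of heredity of the \emph{ideal}, rather than of the cover, is the idea your proposal is missing.
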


%We will apply the previous result to countable spaces with analytic topology. Recall 
%that  space $X$ has the {\em Reznichenko property} (see \cite{Bella2009}) if whenever $x\in  \overline{A}\setminus A$, there are pairwise disjoint finite sets  $K_n \su A$, $n\in \N$, such that every neighborhood of $x$ intersects all but finitely many $K_n$. It is not difficult to show that Theorem \ref{jalali}  implies that every countable space with analytic topology has the Reznichenko property. 

We use the following consequence of the previous theorem. 

\begin{lemma}
\label{jalaili2}
Let $X$ be a countable regular space with analytic topology. Let $A\subset X$ and $x\in \overline{A}\setminus A$. Suppose $(G_k)_k$ is a partition of $A$ into finite sets. Then there is $B\subseteq \N$ such that 
$$
x\in (\overline{\bigcup_{k\in B} G_k}) \cap (\overline{\bigcup_{k \not\in B}G_k}).
$$
\end{lemma}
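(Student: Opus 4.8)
The plan is to transport the problem to an ideal on $\N$ by means of the partition. Define
\[
\mathcal{J}=\Big\{B\subseteq \N:\; x\notin \overline{\textstyle\bigcup_{k\in B}G_k}\Big\}.
\]
Because the $G_k$ are finite and partition $A$, for finite $B$ the set $\bigcup_{k\in B}G_k$ is a finite subset of $A$ not containing $x$, so (as $X$ is regular, hence $T_1$) $x$ is not in its closure; thus $\mathcal{J}$ contains every finite set. Monotonicity and finite additivity of the closure operator show $\mathcal{J}$ is an ideal, while $\N\notin\mathcal{J}$ precisely because $x\in\overline A$. In fact $\mathcal{J}$ is the trace $\ideal_x\!\restriction\! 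A$ pulled back along the block map $B\mapsto\bigcup_{k\in B}G_k$. Finding the desired $B$ then amounts to producing a $B$ with $B\notin\mathcal{J}$ and $\N\setminus B\notin\mathcal{J}$, that is, a set both of whose halves are $\mathcal{J}$-positive.

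The crux is to verify that $\mathcal{J}$ has the Baire property, and here the analyticity of the topology enters; I expect this to be the only real obstacle. First I would check that the neighborhood ideal $\ideal_x$ is analytic as a subset of $2^X$: writing $\tau$ for the topology, one has $S\in\ideal_x$ iff there is $U\in\tau$ with $x\in U$ and $U\cap S\subseteq\{x\}$, so $\ideal_x$ is the projection onto the second coordinate of $\{(U,S): x\in U,\ U\cap S\subseteq\{x\}\}\cap(\tau\times 2^X)$. The set $\{(U,S): x\in U,\ U\cap S\subseteq\{x\}\}$ is closed and $\tau$ is analytic, so the intersection is analytic and hence so is its projection. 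Next, the block map $\varphi\colon 2^{\N}\to 2^X$, $\varphi(B)=\bigcup_{k\in B}G_k$, is continuous, and $\mathcal{J}=\varphi^{-1}(\ideal_x)$; since the continuous preimage of an analytic set is analytic, $\mathcal{J}$ is analytic and therefore has the Baire property.

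With the Baire property in hand I would apply Theorem \ref{jalali} to $\mathcal{J}$, obtaining a partition $(K_n)_n$ of $\N$ into finite sets such that $\bigcup_{i\in C}K_i\notin\mathcal{J}$ for every infinite $C\subseteq\N$. Letting $C_0$ and $C_1$ be the even and the odd indices, both infinite, and setting $B=\bigcup_{i\in C_0}K_i$, the fact that $(K_n)_n$ partitions $\N$ gives $\N\setminus B=\bigcup_{i\in C_1}K_i$. Hence $B\notin\mathcal{J}$ and $\N\setminus B\notin\mathcal{J}$, which by the definition of $\mathcal{J}$ says exactly $x\in\overline{\bigcup_{k\in B}G_k}$ and $x\in\overline{\bigcup_{k\notin B}G_k}$, completing the argument. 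Once $\mathcal{J}$ is seen to be analytic, the remainder is a direct application of the Jalali-Naini--Talagrand theorem together with the even/odd splitting.
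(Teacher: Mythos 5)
Your proof is correct and takes essentially the same route as the paper: both arguments reduce the lemma to the Jalali-Naini--Talagrand theorem (Theorem \ref{jalali}) by showing the relevant ideal is analytic, hence has the Baire property, and then splitting the resulting partition into two infinite pieces. The only difference is bookkeeping: the paper applies the theorem to $\ideal_x$ on $A$ itself and must then recursively interleave the partition $(K_i)$ with the given partition $(G_k)$ to produce $B$, whereas you first pull $\ideal_x$ back to an ideal $\mathcal{J}$ on $\N$ along the continuous block map, so the even/odd split of the Jalali-Naini partition yields $B$ immediately --- a slightly cleaner finish that avoids the meshing step (and your explicit verification that $\ideal_x$ is analytic fills in a step the paper delegates to a citation).
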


\proof Let $A$ and $x$  be as in the hypothesis.  Let $\ideal_x$ be the ideal given by \eqref{Idealx} on $A$ as a subspace of $X$.  Since the topology of $X$ is analytic, then $\ideal_x$ is analytic and thus has the Baire property. Let $(K_i)_i$ be a partition of $A$ into finite sets as given by Theorem \ref{jalali} applied to $\ideal_x$. Recursively define $B$ such that there are infinite many $i$ such that $K_i\subset \bigcup_{k\in B} G_k$ and also infinitely many $i$ such that $K_i\subset \bigcup_{k\not\in B} G_k$. 
\endproof

\begin{theorem}
\label{productoqplus}
Let $X$ be a countable \qq regular space with analytic topology and $Y$ a countable regular space with a countable base. Then $X\times Y$ is \qq. 
\end{theorem}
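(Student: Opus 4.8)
The plan is to show that the neighborhood ideal $\ideal_{(x,y)}$ is \qq\ for every $(x,y)\in X\times Y$. Fix such a point, a set $A\su X\times Y$ with $(x,y)\in\overline{A}\setminus A$ (discarding $(x,y)$ from $A$ if it happens to lie there, which alters only one block), and a partition $(F_n)_n$ of $A$ into finite sets; I must produce $S\su A$ meeting each $F_n$ in at most one point with $(x,y)\in\overline S$. Since $Y$ is regular and second countable it is metrizable, so I fix a decreasing neighborhood base $(V_m)_m$ of $y$. Writing $\pi\colon X\times Y\to X$ for the projection, the basic neighborhoods $U\times V_m$ of $(x,y)$ give the key reformulation: for any $T\su X\times Y$ one has $(x,y)\in\overline T$ if and only if $x\in\overline{\pi(T\cap(X\times V_m))}$ in $X$ for \emph{every} $m$. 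Finally, exactly as in Proposition \ref{inverseAnalytic}, the product topology of $X\times Y$ is analytic (its base $\{U\times V\}$ is a continuous image of $\tau_X\times\tau_Y$, and a countable metric $Y$ has analytic topology), so Lemma \ref{jalaili2} applies to $X\times Y$ as well as to $X$.

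The heart of the argument is to decouple the infinitely many ``window conditions'' indexed by $m$ from one another. Applying Lemma \ref{jalaili2} inside $X\times Y$ to the partition $(F_n)_n$ splits $A$ into two block-unions, each with $(x,y)$ in its closure; iterating this splitting on the successive large remainders produces pairwise disjoint sets $P_0,P_1,\dots$, each a union of whole blocks $F_n$, with $(x,y)\in\overline{P_m}$ for every $m$ (each $P_m$ is infinite, being a set with a limit point outside it). Because the $P_m$ are block-disjoint, any selection drawn from $\bigcup_m P_m$ that meets each block at most once will automatically meet each $F_n$ at most once; the splitting has thus bought the freedom to treat the levels independently.

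I would then solve a single window level inside each piece using the hypothesis on $X$. From $(x,y)\in\overline{P_m}$ and the reformulation above, $x\in\overline{\pi(Q_m)}$ where $Q_m=P_m\cap(X\times V_m)$. Choosing one point of $Q_m$ over each element of $\pi(Q_m)$ yields a subset on which $\pi$ is injective and whose image is still $\pi(Q_m)$; transporting $(F_n)_n$ through this bijection gives a partition of $\pi(Q_m)$ into finite sets, and the \qq property of $X$ produces $T_m\su\pi(Q_m)$ with $x\in\overline{T_m}$ meeting each transported block at most once. Pulling $T_m$ back gives $S_m\su Q_m\su X\times V_m$ with $x\in\overline{\pi(S_m)}$ and $|S_m\cap F_n|\le 1$ for all $n$. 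Setting $S=\bigcup_m S_m$, block-disjointness of the $P_m$ makes $S$ a selector, while for each fixed $m$ we have $S\cap(X\times V_m)\supseteq S_m$ and hence $x\in\overline{\pi(S\cap(X\times V_m))}$; by the reformulation this is precisely $(x,y)\in\overline S$, as required.

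The main obstacle is exactly the simultaneous control of all window conditions by a single selector while keeping the chosen points spread over distinct blocks: each $S_m$ is typically infinite, so naively using up blocks at one level could destroy largeness at the next. This is where the analytic hypothesis is essential — the iterated application of Lemma \ref{jalaili2} in $X\times Y$ performs all the cross-level bookkeeping in advance, after which only a one-level problem remains for the \qq property of $X$ to solve. Two minor points need attention: the projection $\pi$ may collapse points of $A$ sharing an $X$-coordinate (handled by the section chosen above), and $\pi(Q_m)$ may be finite, in which case $x\in\pi(Q_m)$ and a single point serves as $S_m$.
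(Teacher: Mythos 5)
Your proposal is correct and follows essentially the same route as the paper's proof: both first use the analyticity of $X\times Y$ and an iterated application of Lemma \ref{jalaili2} to split $A$ into countably many block-disjoint pieces each clustering at $p$, then for the $m$-th piece work inside the window $X\times V_m$, project to $X$, invoke the \qq\ property of $X$, and lift the resulting selector back; the union of the lifted selectors witnesses \qq. The only differences are bookkeeping (you handle the non-injectivity of the projection by choosing a section and transporting the blocks, while the paper disjointifies the projected blocks and then lifts), so there is nothing further to add.
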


\begin{proof} Since $Y$ is second countable, then its topology is analytic and so is the topology of $X\times Y$. 
Let $p=(x_0,y_0)\in X\times Y$, $A\subseteq X\times Y$ such that $p\in\overline{A}$, and $\{F_i :i\in\N\}$ be a pairwise disjoint family of finite sets such that $A=\bigcup_{i\in\N}F_i$. By a repeated application of Lemma \ref{jalaili2}, there is a partition  $\{\mathcal{L}_i : i\in\N\}$ of $\N$ such that $\mathcal{L}_i$ is infinite and $p\in \overline{\bigcup_{n\in\mathcal{L}_i}F_n}$ for each $i\in\N$. Fix a countable local base $(V_k)_k$ at $y_0$. We claim that for each $i\in\N$, there exists $S_i\subseteq \bigcup_{n\in \mathcal{L}_i}F_n$ such that:
		
		1. $|S_i\cap F_n|\leq 1$, for each $n\in\mathcal{L}_i$; 
		
		2. $y_0\in\overline{\pi_1(S_i)}$; and
		
		3. $S_i\subseteq X\times V_i$.
	
Let  $W_i=X\times V_i$, then  $p\in \overline{\bigcup_{n\in\mathcal{L}_i}(F_n\cap W_i)}$. Let $G_1=\pi_1(F_1\cap W_i)$ and $G_{n+1}=\pi_1(F_{n+1}\cap W_i)\setminus G_n$, for each $n\in\mathcal{L}_i$. It is clear that $\bigcup_{n\in\mathcal{L}_i}G_n=\bigcup_{n\in\mathcal{L}_i}(F_n\cap W_i)$ and $G_i\cap G_j=\emptyset$ whenever $i\neq j$. Since $X$ is a \qq space, there is $R_i\subseteq \bigcup_{n\in\mathcal{L}_i}G_n$, $|R_i\cap G_n|\leq 1$ for each $n\in\mathcal{L}_i$, and $x_0\in\overline{R_i}$. It is easy to see that there exists $S_i\subseteq \bigcup_{n\in\mathcal{L}_i}(F_n\cap W_i)$ such that $|S_i\cap(F_n\cap W_i)|\leq 1$, for each $n\in\mathcal{L}_i$, and $\pi_1(S_i)=R_i$. The proof of the claim is complete.
	
Finally, let $S=\bigcup_{i\in\N}S_i$. Observe that $S\subseteq \bigcup_{n\in\N}F_n$ and $|S\cap F_n|\leq 1$, for each $n\in\N$. We  have to show  that $p\in \overline{S}$. Let $U=O\times V_k$ be an open subset of $X\times Y$ such that $p\in U$. Since $x_0 \in\overline{R_k}$, $R_k\cap O\neq\emptyset$. Thus, there exists $z\in S_k$ such that $\pi_1(z)\in R_k\cap O$. Since $S_k\subseteq X\times V_k$, then $\pi_2(z)\in V_k$. Therefore, $z\in S\cap U$ and $p\in\overline{S}$.
	
\end{proof}

Now we will show that the inverse limit of \qq spaces with analytic topologies is also \qq. First we show an auxiliary result. 

\begin{lemma}\label{lema000}
Let $\{X_n;f_n^{n+1}\}_{n\in\N}$ be an inverse sequence with inverse limit $X_{\infty}$. Let $A\subseteq X_{\infty}$ and let $p=(p_n)_{n\in\N}\in X_{\infty}$ such that $p\in \overline{A}\setminus A$. If there exists an increasing sequence $(n_k)_{k\in\N}$ in $\N$ and $O_k\subseteq  X_\infty$ open with $p\in O_k$ such that $p_{n_k}\notin \overline{\pi_{n_k}(A\cap O_k)\setminus \{p_{n_k}\}}$, for each $k\in\N$, then there is a sequence $(a_i)_{i\in\N}$ in $ A\setminus \{p\}$ such that $\lim_{i\to\infty}a_i=p$.
\end{lemma}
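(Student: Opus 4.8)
The plan is to build the convergent sequence directly, exploiting the fact that in an inverse limit, pinning down a high coordinate automatically pins down all lower ones. First I would unpack the hypothesis at each level: since $p_{n_k}\notin \overline{\pi_{n_k}(A\cap O_k)\setminus\{p_{n_k}\}}$, there is an open set $U_k\su X_{n_k}$ with $p_{n_k}\in U_k$ and $U_k\cap\bigl(\pi_{n_k}(A\cap O_k)\setminus\{p_{n_k}\}\bigr)=\emptyset$. The content of this is a coordinate-matching principle: any $a\in A\cap O_k$ whose $n_k$-th coordinate lies in $U_k$ must in fact satisfy $\pi_{n_k}(a)=p_{n_k}$, for otherwise $\pi_{n_k}(a)$ would belong to the forbidden set.

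Next I would produce the terms of the sequence. For each $k$ set $W_k=O_k\cap\pi_{n_k}^{-1}(U_k)$, which is an open neighborhood of $p$ because $p\in O_k$ and $p_{n_k}\in U_k$. Since $p\in\overline A\setminus A$, the intersection $W_k\cap A$ is nonempty and contained in $A\setminus\{p\}$, so I may choose $a_k\in W_k\cap A$. By the coordinate-matching principle of the previous step, $\pi_{n_k}(a_k)=p_{n_k}$. Using $\pi_m=f_m^{n_k}\circ\pi_{n_k}$ for $m<n_k$, it follows that $\pi_m(a_k)=f_m^{n_k}(p_{n_k})=p_m$ for every $m\le n_k$; that is, $a_k$ agrees with $p$ in all coordinates up to $n_k$.

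Finally I would check convergence against the base from Proposition \ref{baseIL}. Given a basic neighborhood $\pi_m^{-1}(V)$ of $p$ (so $p_m\in V$), the sequence $(n_k)_k$ being strictly increasing gives $n_k\ge m$ for all large $k$, and then $\pi_m(a_k)=p_m\in V$, i.e.\ $a_k\in\pi_m^{-1}(V)$. Hence $a_k\to p$, and each $a_k$ lies in $A\setminus\{p\}$, as required. I do not expect any serious obstacle here: the two points to get right are the passage from $p\in\overline A$ to $W_k\cap A\neq\emptyset$ (immediate, since $W_k$ is an open neighborhood of $p$) and the observation that a single high coordinate controls all lower ones, which is precisely what upgrades the level-by-level hypothesis into honest convergence rather than mere clustering.
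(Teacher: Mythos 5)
Your proof is correct and follows essentially the same route as the paper's: both extract the separating open set $U_k\subseteq X_{n_k}$, pick points of $A\cap O_k$ whose $n_k$-th coordinate is forced to equal $p_{n_k}$, and verify convergence against the base of Proposition \ref{baseIL}. Your convergence check is in fact slightly cleaner, since noting $\pi_m(a_k)=f_m^{n_k}(\pi_{n_k}(a_k))=p_m$ for $m\le n_k$ avoids the paper's detour through continuity of the bonding maps to pull back the basic neighborhood.
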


\begin{proof}
Let $U_{n_k}$ be an open subset of $X_{n_k}$ such that $p_{n_k}\in U_{n_k}$ and $U_{n_k}\cap (\pi_{n_k}(A\cap O_k)\setminus\{p_{n_k}\})=\emptyset$, for each $k\in\N$. Since $p\in \pi_{n_k}^{-1}(U_{n_k})$ and $A\cap O_k\not\in \ideal_p$, we have that $\pi_{n_k}^{-1}(U_{n_k})\cap ((A\cap O_k)\setminus \{p\})\neq\emptyset$, for each $k\in\N$. Let $z_k\in \pi_{n_k}^{-1}(U_{n_k})\cap ((A\cap O_k)\setminus \{p\})$, for each $k\in\N$.  We will show that  $\lim_{k\to\infty}z_k=p$.
Let $W$ be an open subset of $X_{\infty}$ such that $p\in W$. Hence, there exist $i_0\in\N$ and an open $V_{i_0}\subseteq X_{i_0}$ such that $p\in \pi_{i_0}^{-1}(V_{i_0})\subseteq W$ (see Proposition \ref{baseIL}). Let $n_l\geq i_0$. It suffices to show that $z_j\in \pi_{i_0}^{-1}(V_{i_0})$, for all $j\geq l$. Fix $j\geq l$. Since $f_{i_0}^{n_j}\colon X_{n_j}\to X_{i_0}$ is a continuous map, there is an open $V_{n_j}\subseteq X_{n_j}$ such that $p_{n_j}\in V_{n_j}$ and $f_{i_0}^{n_j}(V_{n_j})\subset V_{i_0}$. Thus, $\pi_{n_j}^{-1}(V_{n_j})\subseteq \pi_{i_0}^{-1}(V_{i_0})$.  We claim that 
\begin{equation*}
\pi_{n_j}^{-1}(U_{n_j})\cap ((A\cap O_j)\setminus\{p\})\subseteq \pi_{n_j}^{-1}(V_{n_j}\cap U_{n_j})\cap ((A\cap O_j)\setminus\{p\}).
\end{equation*} 
In fact, let $x\in \pi_{n_j}^{-1}(U_{n_j})\cap ((A\cap O_j)\setminus\{p\})$. Then, $\pi_{n_j}(x)\in U_{n_j}\cap \pi_{n_j}(A\cap O_j)$. Since $U_{n_j}\cap (\pi_{n_j}(A\cap O_j))\setminus \{p_{n_j}\})=\emptyset$, $\pi_{n_j}(x)=p_{n_j}$. Thus, $x\in \pi_{n_j}^{-1}(V_{n_j}\cap U_{n_j})$ and we are done. 

Since $z_j\in \pi_{n_j}^{-1}(U_{n_j})\cap (A\cap O_j)\setminus \{p\})$, then from the claim we have that $z_j\in \pi_{j}^{-1}(V_{n_j})$. Therefore, $z_j\in \pi_{i_0}^{-1}(V_{i_0})$, for each $j\geq l$.
\end{proof}

%{\red No s\'e si valga la pena hacer una observaci\'on de este estilo antes del teorema: 
%\begin{remark}
%	Observe that if $Y\subseteq X$, $z\in Y$ and $A\subseteq Y$, then, $z\in \mathrm{cl}_Y(A\setminus \{z\})$ if and only if $z\in\mathrm{cl}_X(A\setminus\{z\})$. Thus, in the following, we will write $A\in \mathcal{C}_z$ instead to $z\in \mathrm{cl}_Y(A\setminus \{z\})$.
%\end{remark}	
%}

\begin{theorem}\label{teoremaq}
Let $\{X_n;f_n^{n+1}\}_{n\in\N}$ be an inverse sequence of countable spaces with an analytic topology. If $X_n$ is a \qq space, for each $n\in\N$, then every countable subspace of $X_\infty$ is \qq.
\end{theorem}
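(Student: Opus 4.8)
The plan is to reduce the single global condition ``$p\in\overline S$'' in $X_\infty$ to countably many one-level conditions, and then to satisfy each of them separately using that the factors $X_n$ are \qq. Let $Y\su X_\infty$ be countable; by Proposition \ref{inverseAnalytic} its topology is analytic, and as a subspace of the regular space $X_\infty$ it is regular, so Lemma \ref{jalaili2} is available for $Y$. Fix $A\su Y$, a point $p=(p_n)_n\in\overline A$, and a partition $\{F_i:i\in\N\}$ of $A$ into finite sets. If $p\in A$ we may take $S=\{p\}$, so assume $p\in\overline A\setminus A$. The key structural remark is that, since the sets $\pi_n^{-1}(V)$ (with $V\su X_n$ open) form a base for $X_\infty$ (Proposition \ref{baseIL}) and since $\pi_m^{-1}(V_m)\cap\pi_n^{-1}(V_n)$ is again a single set of this form at the larger index (because $\pi_n=f_n^m\circ\pi_m$ for $m>n$), the family $\{\pi_n^{-1}(V):p_n\in V\}$ is a neighborhood base at $p$. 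Consequently, for any $S\su A$,
\[
p\in\overline S\quad\Longleftrightarrow\quad p_n\in\overline{\pi_n(S)}\ \text{ in }X_n\ \text{ for every }n\in\N .
\]
This is exactly the feature that fails for finite products, where basic sets are intersections of two subbasic ones, and is what makes inverse limits tractable here.

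With this in hand I would build $S$ level by level. First, exactly as in the proof of Theorem \ref{productoqplus}, a repeated application of Lemma \ref{jalaili2} yields a partition $\{\mathcal L_n:n\in\N\}$ of $\N$ into infinite sets with $p\in\overline{A^{(n)}}$ for each $n$, where $A^{(n)}=\bigcup_{i\in\mathcal L_n}F_i$. I then devote the block $A^{(n)}$ to arranging the $n$-th closure condition. Since $\pi_n$ is continuous and $p\in\overline{A^{(n)}}$, we have $p_n\in\overline{\pi_n(A^{(n)})}$. Pushing the partition $\{F_i:i\in\mathcal L_n\}$ forward by $\pi_n$ and disjointifying (as with the sets $G_n$ in Theorem \ref{productoqplus}) gives a partition of $\pi_n(A^{(n)})$ into finite sets; the \qq property of $X_n$ then produces $T_n\su\pi_n(A^{(n)})$ with $p_n\in\overline{T_n}$ and at most one point of $T_n$ in each block. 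Choosing, for each point of $T_n$, a single preimage inside the corresponding $F_i$ lifts $T_n$ to a set $S_n\su A^{(n)}$ with $\pi_n(S_n)=T_n$, hence $p_n\in\overline{\pi_n(S_n)}$, and with $|S_n\cap F_i|\le 1$ for every $i$ (distinct points of $T_n$ lie in distinct blocks, hence come from distinct $F_i$).

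Finally set $S=\bigcup_n S_n$. Because the index blocks $\mathcal L_n$ are pairwise disjoint and $S_n\su A^{(n)}$, each $F_i$ meets at most one $S_n$, so $|S\cap F_i|\le 1$ for all $i$; and since $p_n\in\overline{\pi_n(S_n)}\su\overline{\pi_n(S)}$ for every $n$, the displayed equivalence gives $p\in\overline S$, as required. I expect the main obstacle to be justifying the structural equivalence cleanly, as it is precisely here that the nestedness of the projections, and not merely the existence of the base, is used, together with the bookkeeping in the lifting step that preserves ``at most one point per $F_i$'' after passing through $\pi_n$. I note that Lemma \ref{lema000} gives an independent way to dispatch the case in which $p$ is the limit of a nontrivial sequence from $A$ (a subsequence meeting each $F_i$ at most once then works directly); the level-by-level construction above, however, handles the sequential and non-sequential cases uniformly.
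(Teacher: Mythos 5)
Your proof is correct, and it takes a genuinely different — and in fact shorter — route than the paper's. Your pivotal observation, that $p\in\overline{S}$ if and only if $p_n\in\overline{\pi_n(S)}$ for every $n$, is sound: by Proposition \ref{baseIL} the one-coordinate sets $\pi_n^{-1}(V)$ with $p_n\in V$ already form a neighborhood base at $p$, and both directions of the equivalence follow immediately (note it holds even when $p_n\in\pi_n(S)$, since any preimage in $S$ of a point of $V\cap\pi_n(S)$ lies in $\pi_n^{-1}(V)$). The paper never isolates this fact; it works instead with the ideals $\mathcal{I}_{p_n}$, i.e.\ with deleted-closure accumulation $p_n\in\overline{\pi_n(\cdot)\setminus\{p_n\}}$, which can genuinely fail at cofinally many coordinates. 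That is what forces the paper into a two-case argument: Lemma \ref{lema000} shows such failure produces a nontrivial convergent sequence in $A$ (handled directly), and otherwise the paper runs a recursive construction in which, at stage $i$, Lemma \ref{jalaili2} is applied \emph{in the factor} $X_{n_i}$ to split the surviving blocks into two families, the $q^+$ selection is made from one family downstairs while the other, still accumulating at $p$ upstairs, is saved for later stages, and density of the final selector is verified by hand through the bonding maps. You replace all of this bookkeeping (the sequences $(n_i)$, $(B_i)$, $(S_i)$ and conditions (1)--(4)) with a single up-front application of repeated Lemma \ref{jalaili2} \emph{inside} $Y$ — exactly the device of the paper's own proof of Theorem \ref{productoqplus} — assigning each piece $A^{(n)}$ to coordinate $n$; and because you invoke the $q^+$ property in its closure form ($p_n\in\overline{T_n}$ rather than $T_n\notin\mathcal{I}_{p_n}$), the degenerate case $p_n\in\pi_n(A^{(n)})$, which is precisely what Lemma \ref{lema000} exists to handle, is absorbed automatically: the selection and the lift still go through, and the equivalence still yields $p\in\overline{S}$. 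The only cost of your route is that Lemma \ref{jalaili2} must be applied in $Y$ itself, which requires the analyticity supplied by Proposition \ref{inverseAnalytic} together with regularity of $Y$ (regularity of the $X_n$ is implicit here, but it is equally implicit in the paper's application of Lemma \ref{jalaili2} to $X_{n_1}$); in exchange you get a uniform argument with no case distinction and no stage-by-stage recursion.
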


\begin{proof}
Let $Y$ be a countable subset of $X_{\infty}$, and $p=(p_n)_{n\in\N}\in Y$. Let $A\subseteq Y$ such that $p\in \overline{A}$, and $\{F_i : i\in\N\}$ be a pairwise disjoint family of finite subsets of $Y$ such that $A=\bigcup_{i\in\N}F_i$. If there exists a sequence $(z_n)_{n\in\N}$ in $A\setminus\{p\}$ such that $\lim_{n\to\infty}z_n=p$, then it is not difficult to show that there is $S\subseteq A$ such that $p\in \overline{S}$ and $|S\cap F_n|\leq 1$, for each $n\in\N$. 
	Hence, we assume that such sequence $(z_n)_n$ does not exist and thus,  by Lemma \ref{lema000}, there exists $i_0\in\N$ such that $p_j\in \overline{\pi_j(A)\setminus\{p_j\}}$, for each $j\geq i_0$. Let $n_1\geq i_0$. We define a sequence $(G_i)_i$ of subsets of $X_{n_1}$  as follows: Let  $G_0=\pi_{n_1}(F_0)$ and, for each $i>0$, let 
$$
G_i=\pi_{n_1}(F_i)\setminus \bigcup_{j=0}^{i-1}G_j.
$$
Notice that
\begin{itemize}
\item $G_n$ is finite, for each $n\in\N$;
\item $\bigcup_{n\in\N}\pi_{n_1}(F_n)=\bigcup_{n\in\N}G_n$;
\item $G_i\cap G_j=\emptyset$, whenever $i\neq j$.
\end{itemize}
We are going to define   an increasing  sequence $(n_i)_i$ of natural numbers, a sequence $(B_i)_i$ of subsets of $\N$ and a sequence $(S_i)_i$  of sets with $S_i\subseteq X_{n_i}$, for all $i$,  such that 
\begin{enumerate}
		\item $(B_i)_i$ is pairwise disjoint.
		\item $\bigcup_{k\in B_i}G_k \not\in \ideal_{p_{n_i}}$  and $\bigcup_{k\not\in B_i}G_k \not\in \ideal_{p_{n_i}}.$
		\item $S_i\not\in \ideal_{p_{n_i}}$, $S_i\subset \bigcup_{k\in B_i}G_k$ and $|S_i\cap G_k|\leq 1$, for all $k\in B_i$.
		\item $\bigcup_{k\not\in B_i}F_k\not\in \ideal_p$.
\end{enumerate}
Let us suppose  that  $(B_i)_i$, $(S_i)_i$ and $(n_i)_i$ have being constructed and we finish the proof. 
Let $S_i^*\subseteq \bigcup_{k\in B_i}F_k$ be such that
	\begin{itemize}
		\item $|S_i^*\cap F_k|\leq 1$, for each $k\in B_i$;
		\item $\pi_{n_i}(S_i^*)=S_i$.
	\end{itemize} 
Let $S=\bigcup_{i\in\N}S_i^*$ and $B=\bigcup_{i\in\N} B_i$.  Then $|S\cap F_n|\leq 1$, for each $n\in B$. We will  be done  if we show  that $p\in \overline{S}$. Let $W$ be an open subset of $X_{\infty}$ such that $p\in W$. By Proposition \ref{baseIL}, there exist $k\in\N$ and an open set $V_k\subseteq X_k$ such that $p_k\in V_k$ and $\pi_k^{-1}(V_k)\subseteq W$. Let $l\in\N$ such that  $n_l>k$. Since $f_k^{n_l}\colon X_{n_l}\to X_k$ is a continuous map, there is an open set $V_{n_l}\subseteq X_{n_l}$ such that $p_{n_l}\in V_{n_l}$ and  $f_k^{n_l}(V_{n_l})\subseteq V_k$. Hence, $\pi_{n_l}^{-1}(V_{n_l})\subseteq \pi_{k}^{-1}(V_k)\subseteq W$. Since $S_l\not\in \ideal_{p_{n_l}}$, $V_{n_l}\cap S_l\neq\emptyset$. Then   $S_l^*\cap\pi_{n_l}^{-1}(V_{n_l})\neq\emptyset$. Thus, $S\cap \pi_{n_l}^{-1}(V_{n_l})\neq\emptyset$ and, since $\pi_{n_l}^{-1}(V_{n_l})\subseteq W$, $S\cap W\neq\emptyset$. Therefore, $p\in \overline{S}$. 
	
Now we start the construction of $(B_i)_i$, $(S_i)_i$ and $(n_i)_i$.   We have already chosen $n_1$. Since  the topology on $X_{n_1}$ is analytic, by Lemma \ref{jalaili2}, there exists  $D\su \N$ such that  
\begin{equation*}
\bigcup_{k\in D}G_k \not\in \ideal_{p_{n_1}}\ \text{ and }\ \bigcup_{k\not\in D} G_k \not\in \ideal_{p_{n_1}}.
\end{equation*}
Since $A=(\bigcup_{k\in D }F_k)\cup (\bigcup_{k\not\in D}F_k)$, we have that $\bigcup_{k\in D}F_k\not\in \ideal_p$ or $\bigcup_{k\not\in D}F_k\not\in \ideal_p$. Without loss of generality, we suppose that $\bigcup_{k\not\in D}F_k\not\in \ideal_p$ and we let $B_1=\N\setminus D$. 
Since $\bigcup_{k\in D}G_k \not\in \ideal_{p_{n_1}}$ and $X_{n_1}$ is a \qq space, there exists $S_1\not\in\ideal_{p_{n_1}}$ such that $S_1\subset \bigcup_{k\in D} G_k$ and $|S_1\cap G_k|\leq 1$, for all $k\in D$.

We will present one more step in the construction and the pattern will be clear. 
Let $A_1=\bigcup_{k\not\in B_1}F_k$.  By construction,  $p\in\overline{A_1}$.  As  in the first step, we can assume that there is no 
sequence $(z_n)_{n\in\N}$ in $ A_1\setminus\{p\}$ such that $\lim_{n\to\infty}z_n=p$. Thus we can apply Lemma \ref{lema000} to get $n_2> n_1$ such that $p_{n_2}\in \overline{\pi_{n_2}(\bigcup_{i\in\mathcal{B}_1}F_i)\setminus \{p_{n_2}\}}$.  We repeat the same argument used in the first step but now to the set $A_1$ and the partition $(F_k)_{k\not\in B_1}$ and  find $B_2\subset \N\setminus B_1$ and a partial selector $S_2\subseteq X_{n_2}$ for the partition  $(G_k)_{k\not\in B_1}$.
\end{proof}

\section{Countable fan-tightness}

In this section we show that  countable fan-tightness is preserved under inverse limits. We star with an auxiliary result. 

\begin{lemma}\label{lema002}
Let $f\in Y^X$ and let $(A_n)_{n\in\N}$ be a decreasing sequence of subsets of $X$. Suppose   $B\subseteq f(A_0)$ and  $B\subseteq^*f(A_n)$, for each $n\in\N$. If  $B\cap (\cap_{n\in\N}f(A_n))$ is (at most) countable, then there exists $D\subseteq A_0$ such that $D\subseteq^*A_n$, for each $n\in\N$,  and $f(D)=B$.
\end{lemma}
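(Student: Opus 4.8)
The plan is to build $D$ by choosing, for each $b \in B$, a single preimage $d_b \in A_0$ with $f(d_b) = b$, and setting $D = \{d_b : b \in B\}$. Since distinct elements of $B$ must have distinct preimages, $f(D) = B$ holds automatically, and the only real work is to arrange the pseudo-intersection condition $D \subseteq^* A_n$ by placing each $d_b$ at a sufficiently deep level.

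First I would record that, because $(A_n)_n$ is decreasing, the images $f(A_n)$ are decreasing as well, so $f(A_0) \supseteq f(A_1) \supseteq \cdots$. Hence for each $b \in B \subseteq f(A_0)$ the set $\{n : b \in f(A_n)\}$ is an initial segment of $\N$ containing $0$; put $m(b) = \sup\{n : b \in f(A_n)\} \in \N \cup \{\infty\}$. Monotonicity gives the clean equivalence $m(b) < n \iff b \notin f(A_n)$, and therefore $\{b \in B : m(b) < n\} = B \setminus f(A_n)$, which is finite by the hypothesis $B \subseteq^* f(A_n)$.

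Next I would split $B$ into $B_{\mathrm{fin}} = \{b : m(b) < \infty\}$ and $C = \{b : m(b) = \infty\} = B \cap \bigcap_n f(A_n)$. For $b \in B_{\mathrm{fin}}$ choose $d_b \in A_{m(b)}$ with $f(d_b) = b$, possible since $b \in f(A_{m(b)})$. For a fixed $n$, if $m(b) \ge n$ then $d_b \in A_{m(b)} \subseteq A_n$; hence $\{b \in B_{\mathrm{fin}} : d_b \notin A_n\} \subseteq \{b : m(b) < n\} = B \setminus f(A_n)$ is finite. The set $C$ is where the countability hypothesis enters: enumerate $C = \{c_0, c_1, c_2, \dots\}$ (this is exactly what ``$B \cap \bigcap_n f(A_n)$ at most countable'' provides), and since $c_j \in f(A_j)$ choose $d_{c_j} \in A_j$ with $f(d_{c_j}) = c_j$. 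For fixed $n$, $j \ge n$ forces $d_{c_j} \in A_j \subseteq A_n$, so $\{c_j : d_{c_j} \notin A_n\} \subseteq \{c_0, \dots, c_{n-1}\}$ is finite.

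Finally, with $D = \{d_b : b \in B\} \subseteq A_0$, combining the two estimates shows that $D \setminus A_n$ is finite for every $n$, i.e. $D \subseteq^* A_n$, while $f(D) = B$ holds by construction. The one genuine obstacle is the treatment of $C$: elements lying in every $f(A_n)$ have no ``last'' level at which to place a preimage, so a naive greedy choice fails. The countability of $C$ is precisely what permits the diagonal assignment sending $c_j$ to level $j$, which guarantees that each $A_n$ omits only the finitely many preimages coming from $c_0, \dots, c_{n-1}$.
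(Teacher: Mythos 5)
Your proposal is correct and follows essentially the same route as the paper's proof: you split $B$ into the layers that exit the decreasing images $f(A_n)$ (your $B_{\mathrm{fin}}$, the paper's finite sets $F_n = B \cap (f(A_n)\setminus f(A_{n+1}))$ with preimages $L_n$) and the core $B \cap \bigcap_n f(A_n)$, which both arguments handle by the same diagonal choice of a preimage of the $j$-th element inside $A_j$, using countability exactly where the paper does. The only cosmetic difference is that you pick one preimage per point at its maximal level $m(b)$, while the paper picks a finite preimage set $L_n \subseteq A_n \setminus A_{n+1}$ for each layer; the verification of $D \subseteq^* A_n$ and $f(D)=B$ is the same.
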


\begin{proof}
Let  $B\cap (\cap_{n\in\N}f(A_n))=\{b_n : n\in\N\}$. Since $b_n\in f(A_n)$, for each $n\in\N$, let $a_n\in A_n$ be such that $f(a_n)=b_n$. Let $F_n=B\cap (f(A_n)\setminus f(A_{n+1}))$, for each $n\in\N$. Since $B\subseteq^*f(A_{n+1})$, then $F_n$ is finite, for each $n\in\N$. Let $L_n\subseteq A_n\setminus A_{n+1}$ be a finite set such that $f(L_n)=F_n$, for each $n\in\N$.

Let $$
D=\left(\cup_{n\in\N}L_n\right)\cup\{a_n : n\in\N\}.
$$
It is clear that  $D\subseteq A_0$. We check that $D\subseteq^*A_n$, for each $n\in\N$.  In fact,  let $i_0\in\N$. Since $(A_n)_{n\in\N}$ is a decreasing sequence, we have that $L_i\subseteq A_{i_0}$, for each $i\geq i_0$. Hence, $\cup_{i\geq i_0}L_i\subseteq A_{i_0}$. Furthermore, $\{a_n :n\in\N\}\subseteq^*A_{i_0}$ and $L_i$ is finite, for each $i\in\{0,...,i_0-1\}$. Therefore, $D\subseteq^*A_{i_0}$, and $D\subseteq^*A_{j}$ for each $j\in\N$.

Finally, we verify that $f(D)=B$. Since $f(L_n)=F_n\subseteq B$ and $f(a_n)=b_n\in B$, for each $n\in\N$, we have that $f(D)\subset B$. Conversely,  let $y\in B$. If $y\in F_n$ (i.e., $y\in f(A_n)\setminus f(A_{n+1})$), for some $n$, then, since  $f(L_n)=F_n$ and $L_n\subseteq D$,  $y\in f(D)$. Otherwise,  $y\in \cap_{n\in\N}f(A_n)$, then $y=b_k$, for some $k\in\N$. Thus, $y=f(a_k)$ and $y\in f(D)$. Therefore, $B\subseteq f(D)$ and $f(D)=B$.
\end{proof}

\begin{theorem}
Let $\{X_n;f_n^{n+1}\}_{n\in\N}$ be an inverse sequence of countable spaces and let $X_{\infty}$ be the inverse limit of $\{X_n;f_n^{n+1}\}_{n\in\N}$. If $X_n$ has countable fan-tightness, for each $n\in\N$, then $X_{\infty}$ also has it.
\end{theorem}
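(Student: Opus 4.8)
The plan is to reduce the problem to a coordinatewise application of fan-tightness in the factors, exploiting the special form of the basic open sets of $X_\infty$. Fix $p=(p_n)_n\in X_\infty$ together with a sequence $(A_i)_i$ of subsets of $X_\infty$ such that $p\in\overline{A_i}$ for every $i$; I must produce finite sets $K_i\subseteq A_i$ with $p\in\overline{\bigcup_i K_i}$. The crucial observation, and the step that makes the whole argument go through, is that by Proposition \ref{baseIL} a neighborhood base at $p$ consists of the single-coordinate cylinders $\pi_n^{-1}(V)$ with $V\ni p_n$ open in $X_n$. Consequently, for any $D\subseteq X_\infty$ one has $p\in\overline{D}$ if and only if $p_n\in\overline{\pi_n(D)}$ in $X_n$ for every $n\in\N$. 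Thus it suffices to build $D=\bigcup_i K_i$ whose projection to each coordinate $n$ accumulates at $p_n$.

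First I would fix a partition $\N=\bigsqcup_{n\in\N}P_n$ into infinitely many infinite pieces, dedicating the stages in $P_n$ to controlling the $n$-th coordinate. Since $\pi_n$ is continuous, $p_n\in\overline{\pi_n(A_i)}$ for every $i$; applying the countable fan-tightness of $X_n$ to the sequence $(\pi_n(A_i))_{i\in P_n}$ yields finite sets $L_i\subseteq\pi_n(A_i)$ (for $i\in P_n$) with $p_n\in\overline{\bigcup_{i\in P_n}L_i}$. For each such $i$ I then lift $L_i$ to a finite set $K_i\subseteq A_i$ with $\pi_n(K_i)=L_i$, simply by choosing one preimage in $A_i$ for each point of $L_i$. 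Because the $P_n$ partition $\N$, this defines $K_i$ for every index $i$, and each $K_i$ is a finite subset of $A_i$, as required.

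Finally I would verify that $D=\bigcup_i K_i$ works. Fix $n$; then $\pi_n(D)\supseteq\bigcup_{i\in P_n}\pi_n(K_i)=\bigcup_{i\in P_n}L_i$, so $p_n\in\overline{\pi_n(D)}$. As this holds for every $n$, the base description from the first paragraph gives $p\in\overline{D}$, which is exactly the fan-tightness conclusion. The point I expect to be the real obstacle, and the reason the analogous statement fails for finite products, is the coordination of infinitely many coordinates at once; here it dissolves precisely because the basic neighborhoods of $X_\infty$ constrain only a single coordinate, so accumulation can be arranged one coordinate at a time along disjoint blocks of stages. I note that one could instead run the argument through the $p^+$ reformulation of fan-tightness, working with a decreasing $(A_i)_i$ and producing a pseudo-intersection of $(\pi_n(A_i))_i$ in a single coordinate which one then lifts; but that route must control the coordinates above the chosen one as well, and it is exactly this higher-coordinate bookkeeping that Lemmas \ref{lema000} and \ref{lema002} are designed to manage.
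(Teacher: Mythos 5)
Your proof is correct, but it is genuinely different from the one in the paper, and in fact more elementary. Your key step --- that in an inverse limit $p\in\overline{D}$ if and only if $p_n\in\overline{\pi_n(D)}$ for every $n$ --- is valid precisely because, by Proposition \ref{baseIL}, the basic neighborhoods $\pi_n^{-1}(V)$ constrain a single coordinate (finite intersections of cylinders collapse to one cylinder via the bonding maps, which is exactly where general products fail); with that equivalence, your block partition $\N=\bigsqcup_n P_n$, the coordinatewise application of fan-tightness to $(\pi_n(A_i))_{i\in P_n}$, and the trivial lifting of each finite $L_i$ to a finite $K_i\subseteq A_i$ close the argument with no further issues. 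The paper instead proves the theorem through the $p^+$ reformulation (that $\ideal_z$ is $p^+$ for each $z$), so it works with a \emph{decreasing} sequence $(A_n)_n$ and must produce a single set $\BB$ with $\BB\subseteq^*A_n$ for all $n$; this forces two pieces of machinery you avoid: a dichotomy (either infinitely many coordinates satisfy $z_{n_k}\notin\overline{\pi_{n_k}(A_{m_k})\setminus\{z_{n_k}\}}$, in which case a convergent sequence is extracted as in Lemma \ref{lema000}, or else all projections accumulate nontrivially from some index on), and the lifting Lemma \ref{lema002}, needed because a $p^+$-pseudo-intersection in a coordinate must be pulled back respecting almost-inclusions $\subseteq^*$. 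Your finite-selection formulation dissolves both difficulties: finite sets lift by choosing one preimage per point, there is no $\subseteq^*$ bookkeeping, and the $\setminus\{z_n\}$ subtlety never arises since fan-tightness selections need not avoid preimages of $p_n$. What the paper's route buys is integration with the ideal-theoretic framework used throughout (the same Case 1/Case 2 pattern and Lemma \ref{lema000} are reused for the $q^+$ and discrete-generation theorems, and Lemma \ref{lema002} is stated in reusable generality), whereas your route buys brevity and self-containment for this particular theorem.
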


\begin{proof}
Let $z=(z_n)_{n\in\N}\in X_{\infty}$, we will show that $\ideal_z$ is \pp. Let $(A_n)_{n\in\N}$  be a sequence of subsets of $X$ such that $A_{n+1}\subseteq A_n$ and $z\in \overline{A_n}$, for each $n\in\N$. We have to show there is  $A\not\in\ideal_z$ such that $A\subseteq^*A_n$, for each $n\in\N$. 
We consider two cases.

\medskip

{\em Case 1.} For each $k\in\N$, there exist $n_k, m_k\geq k$, such that $z_{n_k}\notin \overline{\pi_{n_k}(A_{m_k})\setminus\{z_{n_k}\}}$. Let $U_{n_k}$ be an open set such that $z_{n_k}\in U_{n_k}$ and $U_{n_k}\cap (\pi_{n_k}(A_{m_k})\setminus\{z_{n_k}\})=\emptyset$, for each $k\in\N$. Since $A_{m_k}\not\in\ideal_{z}$, $\pi_{n_k}^{-1}(U_{n_k})\cap (A_{m_k}\setminus\{z\})\neq\emptyset$, for each $k\in\N$.  Let $a_k\in \pi_{n_k}^{-1}(U_{n_k})\cap (A_{m_k}\setminus\{z\})$, for each $k\in\N$ and  $A=\{a_k : k\in\N\}$.  Since $m_k\geq k$, for each $k\in\N$, and $(A_n)_{n\in\N}$ is a decreasing sequence, we have that $A_{m_l}\subseteq A_n$, for each $l\geq n$. Thus, $A\subseteq^*A_n$ for each $n\in\N$. 

We  will show that $z\in \overline{A}$. Let $W$ be an open set such that $z\in W$. Then, there exist $l\in\N$ and an open set $V_l\subseteq X_l$, such that $z_l\in V_l$ and $\pi_l^{-1}(V_l)\subseteq W.$ We prove that $a_l\in W$. Let $n_l, m_l\geq l$. By construction,  $z_{n_l}\notin \overline{\pi_{n_l}(A_{m_l})\setminus\{z_{n_l}\}}$. Since $f_l^{n_l}\colon X_{n_l}\to X_l$ is a continuous map, there exists an open set $V_{n_l}$ such that $z_{n_l}\in V_{n_l}$ and $f_l^{n_l}(V_{n_l})\subset V_l$; i.e., $\pi_{n_l}^{-1}(V_{n_l})\subseteq \pi_l^{-1}(V_l)\subseteq W$.

We claim that 
\begin{equation}\label{eq012}
\pi_{n_l}^{-1}(U_{n_l})\cap A_{m_l}\subseteq \pi_{n_l}^{-1}(U_{n_l}\cap V_{n_l})\cap A_{m_l}.
\end{equation}
In fact, let $y\in \pi_{n_l}^{-1}(U_{n_l})\cap A_{m_l}$. Hence, $\pi_{n_l}(y)\in U_{n_l}\cap \pi_{n_l}(A_{m_l})$. Since $U_{n_l}\cap (\pi_{n_l}(A_{m_l})\setminus\{z_{n_l}\})=\emptyset$, then  $\pi_{n_l}(y)=z_{n_l}$. Thus, $y\in  \pi_{n_l}^{-1}(U_{n_l}\cap V_l)\cap A_{m_l}$ and  (\ref{eq012}) holds.

Therefore,  $\pi_{n_l}^{-1}(U_{n_l})\cap A_{m_l}\subseteq \pi_{n_l}^{-1}(V_{n_l})\subseteq \pi_l^{-1}(V_l)\subseteq W$, by (\ref{eq012}). Hence, $a_{l}\in W$, $W\cap (A\setminus\{z\})\neq\emptyset$, and $z\in \overline{A}$.

\medskip

{\em Case 2.} There exists $k_0\in\N$ such that $\pi_n(A_m)\not\in\ideal_{z_n}$, for each $n,m\geq k_0$. %By \cite{engelkig}, we may suppose that $p_{n}\in \overline{\pi_{n}(A_m)\setminus\{p_n\}}$, for all $n,m\in\N$.
Let ${P}_{k_0}, {P}_{k_0+1},...$ be a pairwise disjoint family of infinite subsets of $\N$ such that $\N=\bigcup_{i\geq k_0} {P}_i$. Let $j\geq k_0$.  Since $(\pi_j(A_m))_{m\in{P}_{j}}$ is a decreasing sequence  of sets not in  $\ideal_{z_j}$ and  $\ideal_{z_j}$ is \pp, there exists $B_j\not\in\ideal_{z_j}$ such that $B_j\subseteq^*\pi_j(A_n)$, for each $n\in {P}_{j}$. Let $n_{j}=\min {P}_{j}$, since $B_j\setminus \pi_j(A_{n_{j}})$ is finite, we may suppose that $B_j\subseteq \pi_j(A_{n_{j}})$. By Lemma \ref{lema002}, there exists $\BB_j\subseteq A_{n_{j}}$ such that $\BB_j\subseteq^*A_n$, for all  $n\in {P}_{j}$, and $\pi_j(\BB_j)=B_j$. Thus,  we have constructed a sequence  $\BB_j\subseteq A_{n_{j}}$, for $j\geq k_0$, where $n_{j}=\min {P}_{j}$. Let $\BB=\bigcup_{j\geq k_0}\BB_j$. We will show that $\BB\subseteq^* A_n$, for all  $n\in\N$ and $\BB\not\in \ideal_z$.

%\begin{enumerate}
(a) Let $m_0\in\N$. Since $({P}_j)_{j\geq k_0}$  is a partition of $\N$, there is $l\geq k_0$ such that $\{0,...,m_0\}\subseteq \cup_{j=k_0}^l {P}_j$. For each $i\in\{k_0,...,l\}$, let $r_i\in {P}_i$ such that $r_i>m_0$. We  will show that 
$$
\BB\setminus A_{m_0}=\bigcup_{j\geq k_0} \left(\BB_j\setminus A_{m_0}\right)\subseteq \bigcup_{i=k_0}^{l}\left(\BB_{i}\setminus A_{r_i}\right).
$$
Let $x\in \BB\setminus A_{m_0}$. Hence, $x\in \BB_j$ for some $j\geq k_0$. If $j> l$, then $n_{j}=\min {P}_{j}>m_0$, hence $\BB_j\subseteq A_{n_{j_0}}\subseteq A_{m_0}$ and $\BB_j\setminus A_{m_0}=\emptyset$. Thus, $\bigcup_{j\geq k_0} (\BB_j\setminus A_{m_0})=\bigcup_{i=k_0}^{l} (\BB_i\setminus A_{m_0})$. Since $A_{r_j}\subseteq A_{m_0}$, for each $j\in\{k_0,...,l\}$, then  $\bigcup_{i=k_0}^{l} (\BB_i\setminus A_{m_0})\subseteq \bigcup_{i=k_0}^{l} (\BB_i\setminus A_{r_i})$. Therefore, $\BB\setminus A_{m_0}\subseteq \bigcup_{i=k_0}^{l}(\BB_i\setminus A_{r_i}).$  

Since $\BB_i\setminus A_{r_i}$ is finite, for all $i\in\{k_0,...,l\}$, we have that  $\BB\subseteq^*A_{m_0}$. Therefore, we have shown that $\BB\subseteq^* A_n$, for all  $n\in\N$.

(b)  Let $W$ be an open set with  $z\in W$. Thus, there exist $j\geq k_0$ and an open set $V_j\subseteq X_j$ such that $z_j\in V_j$ and $\pi_j^{-1}(V_j)\subseteq W$. Since $B_j\in\mathcal{C}_{z_j}$, $B_j\cap V_j\neq\emptyset$. As  $\pi_j(\BB_j)=B_j$, then  $\BB_j\cap \pi_j^{-1}(V_j)\neq\emptyset$ and $\BB\cap W\neq\emptyset$. Therefore, $\BB\not\in \ideal_z$.
%\end{enumerate}
\end{proof}

\section{Discrete generation and selective separability}

The product of two discretely  generated spaces is not necessarily discretely generated.   In fact,   Murtinova  \cite{murtinova2006} constructed  under CH two countable Fr\'echet spaces whose product is not discretely generated.  Our next result shows that inverse limit preserves discrete generation. 

\begin{theorem}\label{teoremadg}
	Let $\{X_n;f_n^{n+1}\}_{n\in\N}$ be an inverse sequence such that $X_n$ is a regular space, for each $n\in\N$. If $X_n$ is discretely generated, for each $n\in\N$, then the inverse limit $X_{\infty}=\underleftarrow{\lim}\{X_n;f_n^{n+1}\}_{n\in\N}$ is also discretely generated. 
\end{theorem}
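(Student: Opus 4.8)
The plan is to fix $A\subseteq X_\infty$ and $p=(p_n)_n\in\overline A$ and produce a discrete $E\subseteq A$ with $p\in\overline E$. If $p\in A$ (in particular if $p$ is isolated) then $E=\{p\}$ works, so I assume $p\in\overline A\setminus A$ with $p$ non-isolated, so that $\ideal_p$ and Lemma \ref{lema000} are available. The reformulation I would use throughout is that, by Proposition \ref{baseIL}, $p\in\overline E$ if and only if $p_k\in\overline{\pi_k(E)}$ (closure in $X_k$) for every $k\in\N$. Moreover, for $k<n$ one has $\pi_k=f_k^{n}\circ\pi_n$ and $f_k^{n}(p_n)=p_k$, so $p_n\in\overline{\pi_n(E)}$ already forces $p_k\in\overline{\pi_k(E)}$ for all $k\le n$. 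Hence it suffices to arrange $p_n\in\overline{\pi_n(E)}$ for a cofinal set of coordinates $n$.

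Next I would set up a dichotomy via Lemma \ref{lema000} taken with $O_k=X_\infty$. If there are arbitrarily large $n$ with $p_n\notin\overline{\pi_n(A)\setminus\{p_n\}}$, then (\emph{Case 1}) there is a sequence $(a_i)\subseteq A\setminus\{p\}$ converging to $p$; passing to an injective subsequence, $\{a_i\}$ is discrete (Hausdorffness separates each $a_i$ from $p$ and hence from the cofinitely many nearby later terms, and $T_1$ excludes the finitely many remaining ones) and has $p$ in its closure, so $E=\{a_i:i\in\N\}$ works. Otherwise (\emph{Case 2}) no sequence in $A\setminus\{p\}$ converges to $p$, and I would prove the localized statement actually needed below: in Case 2, for every open $W\ni p$ there is $N_W$ with $p_n\in\overline{\pi_n(A\cap W)\setminus\{p_n\}}$ for all $n\ge N_W$. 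Indeed, otherwise one could pick arbitrarily large $n_k$ violating this and set $O_k=W$ in Lemma \ref{lema000}, again producing a sequence in $A\setminus\{p\}$ converging to $p$, contradicting Case 2.

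In Case 2 I would build recursively an increasing sequence $n_0<n_1<\cdots\to\infty$, open neighborhoods $X_\infty=W_0\supseteq W_1\supseteq\cdots$ of $p$ with $\overline{W_{j+1}}\subseteq W_j$, and discrete blocks $E_j\subseteq A\cap W_j$. At stage $j$ the localized claim gives $n_j>n_{j-1}$ with $p_{n_j}\in\overline{\pi_{n_j}(A\cap W_j)\setminus\{p_{n_j}\}}$; since $X_{n_j}$ is discretely generated, there is a discrete $D_j\subseteq\pi_{n_j}(A\cap W_j)\setminus\{p_{n_j}\}$ with $p_{n_j}\in\overline{D_j}$, and choosing one $A\cap W_j$-preimage for each point of $D_j$ yields $E_j\subseteq A\cap W_j$ on which $\pi_{n_j}$ is injective; continuity of $\pi_{n_j}$ then makes $E_j$ discrete and gives $p_{n_j}\in\overline{\pi_{n_j}(E_j)}$. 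Here I insert the key shortcut: if $p\in\overline{E_j}$, stop and output $E=E_j$, a discrete subset of $A$ with $p$ in its closure. Otherwise $p\notin\overline{E_j}$, and by regularity I choose $W_{j+1}$ with $p\in W_{j+1}$, $\overline{W_{j+1}}\subseteq W_j$ and $\overline{W_{j+1}}\cap E_j=\emptyset$, so that $E_j\subseteq W_j\setminus\overline{W_{j+1}}$.

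Finally, if the recursion never stops, set $E=\bigcup_j E_j$. The nesting makes the blocks live in pairwise disjoint annuli: for $e\in E_j$ the open set $W_j\setminus\overline{W_{j+1}}$ contains $e$, is disjoint from every $E_i$ with $i<j$ (each such $E_i$ avoids $\overline{W_{i+1}}\supseteq W_j$) and from every $E_k$ with $k>j$ (each such $E_k$ lies in $W_k\subseteq\overline{W_{j+1}}$), so it meets $E$ exactly in $E_j$; intersecting with a neighborhood isolating $e$ inside the discrete set $E_j$ shows $e$ is isolated in $E$, whence $E$ is discrete. And $p\in\overline E$ because for each $k$ I pick $j$ with $n_j\ge k$, so $p_{n_j}\in\overline{\pi_{n_j}(E_j)}\subseteq\overline{\pi_{n_j}(E)}$ and therefore $p_k=f_k^{n_j}(p_{n_j})\in\overline{\pi_k(E)}$; as this holds for all $k$, $p\in\overline E$. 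The main obstacle is exactly the discreteness of this infinite union of necessarily infinite blocks (a single coordinate never suffices, and finite blocks cannot accumulate); I overcome it through the nested-neighborhood annulus separation, which is made possible by the early-termination shortcut guaranteeing $p\notin\overline{E_j}$ at every continuing stage. I note that this argument uses only regularity (with $T_1$) of the factors, consistent with the hypotheses, and needs neither countability nor analyticity.
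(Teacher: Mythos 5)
Your proof is correct and takes essentially the same route as the paper's: the same dichotomy obtained from Lemma \ref{lema000}, the same pullback of a discrete set through a single projection $\pi_{n}$ (discreteness via injectivity of the projection on the block), and the same nested regular neighborhoods that make the infinite union of blocks discrete. Your reorganizations (deriving the localized coordinatewise condition from the nonexistence of a convergent sequence, building blocks only at cofinally many coordinates $n_j$ rather than at every $n\geq n_0$, and replacing the paper's Case 2a/2b split by the early-termination test $p\in\overline{E_j}$) are cosmetic variants of the paper's Cases 1, 2a and 2b.
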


\begin{proof}
	Let $p=(p_n)_{n\in\N}\in X_{\infty}$ and let $A\subseteq X_{\infty}$ such that $p\in \overline{A\setminus\{p\}}$. We will show that there is a discrete  $D \subseteq A\setminus \{p\}$ such that $p\in \overline{D}$.
	
	\medskip
	
	\noindent {\em Case 1:} Suppose there  exists an increasing sequence $(n_k)_{k\in\N}$ in $\N$ and $V_k\subseteq X_\infty$  open with $p\in V_k$, for $k\in \N$ such that  $p_{n_k}\notin \overline{\pi_{n_k}(A\cap V_k)\setminus \{p_{n_k}\}}$ for  all $k\in\N$.
	
	In this case, by Lemma \ref{lema000}, there is a sequence in $A\setminus\{p\}$ converging to $p$ which provides the required discrete set. 
	
	\medskip

	\noindent {\em Case 2:} There  exists $n_0$ such that  $p_{n}\in \overline{\pi_{n}(A\cap V)\setminus \{p_{n}\}}$, for each $n\geq n_0$ and each $V\subseteq X_\infty$ with $p\in V$. 
	
	\medskip
	
	\noindent {\em Case 2a:} There are  $n\geq n_0$, $E\subseteq X_n$ discrete and $D\subseteq A\setminus \{p\}$ such that $\pi_n(D)=E$ and $|\pi_n^{-1}(x)\cap D|=1$ for all $x\in E$ and $p\in \overline{D}$.

	We claim that  $D$ is discrete and thus the conclusion holds. In fact, let $z\in D$. Since $E$ is discrete and $\pi_n(z)\in E$, there exists an open set $U\subseteq X_n$ such that $U\cap E=\{\pi_n(z)\}$. Since $|D\cap \pi_n^{-1}(x)|=1$, for each $x\in E$, we have that $\pi_n^{-1}(U)\cap D=\{z\}$. Therefore, $D$ is a discrete subset of $A$.
	
	\medskip
	
	\noindent {\em Case 2b:} For all $n\geq n_0$, all $E\subseteq X_n$ discrete and all $D\subseteq A\setminus \{p\}$ such that $\pi_n(D)=E$ and $|\pi_n^{-1}(x)\cap D|=1$ for all $x\in E$, we have that  $p\notin \overline{D}$.
	
	\medskip
	
	We will construct sequences of sets $(V_k)_{n\geq n_0}$, $(D_n)_{n\geq n_0}$ and $(E_n)_{n\geq n_0}$ such that 
	\begin{itemize}
		\item[(i)]  $V_n\subset X_\infty$ is  open, $p\in V_n$ and $\overline{V_{n+1}}\subseteq V_n$,

		\item[(ii)] $E_n\subseteq X_n\setminus\{p_n\} $ is discrete, $p_n\in \overline{E_n}$, $D_n\subseteq A\setminus\{p¬?\} $ is discrete, $\pi_n(D_n)=E_n$,  $|\pi_n^{-1}(x)\cap D_n|=1$ for all $x\in E_n$,      $D_n\subseteq V_{n}$ and $D_n\cap \overline{V_{n+1}}=\emptyset$,  
	\end{itemize}
	
	We start the construction.  Let $V_{n_0}=X$. Since $p_{n_0}\in \overline{\pi_{n_0}A\setminus \{p_{n_0}\}}$ and $X_{n_0}$ is discretely generated, there is $E_{n_0}\subseteq X_{n_0}\setminus\{p_{n_0}\} $ is discrete with $p_{n_0}\in \overline{E_{n_0}}$. Let $D_{n_0}\subseteq A\setminus\{p¬?\} $ such that  $|\pi_{n_0}^{-1}(x)\cap D_{n_0}|=1$ for all $x\in E_{n_0}$. By the argument used in Case 2a, we have that $D_{n_0}$ is discrete. By the hypothesis of case 2b, we know that $p\notin \overline{D_{n_0}}$. Let $V_{n_0+1}$ be an open set containing $p$ such that $\overline{V_{n_0+1}}\subseteq V_{n_0}$ and $\overline{V_{n_0+1}}\cap D_{n_0}=\emptyset$. Now we repeat the process inside $A\cap V_{n_0+1}$ and find $D_{n_0+1}$ and $E_{n_0+1}$ as required.  The rest of the construction is analogous to the first step.

	Let $D=\bigcup_{n\geq n_0}D_{n}$. Since $D\cap (X\setminus \overline{V_{n}})= D_{n_0}\cup \cdots\cup D_{n-1}$, then $D$  is discrete. Finally, we prove that $p\in\overline{D}$. Let $m\in \N$ and  $U_{m}$ be an open subset of $X_{m}$ such that $p\in\pi_m^{-1}(U_m)$. Let $n>\max\{n_0,m\}$. Since $f_m^{n}\colon X_{n}\to X_m$ is continuous, there exists an open set $U_{n}$ such that $p_{n}\in U_{n}$ and $f_m^{n}(U_{n})\subseteq U_m$. Thus, $\pi_{n}^{-1}(U_{n})\subseteq \pi_m^{-1}(U_m)$. Since $p_{n}\in\overline{E_{n}}$, then  $U_{n}\cap E_{n}\neq\emptyset$. Hence, $\pi_{n}^{-1}(U_{n})\cap D_{n}\neq\emptyset$. Therefore, $D\cap \pi_{n}^{-1}(U_{n})\neq\emptyset$ and $p\in\overline{D}$.
\end{proof}

We recall that an heredirary class of spaces is closed under countable products iff it is closed under finite products and inverse limits.  Since DG is a hereditary property, then we immediately get the following.

\begin{corollary} (Alas-Wilson \cite[Theorem 2.5]{AlasWilson2013})
	Let $X_n$ be a discretely generated regular space for each $n\in\N$. Suppose that for each $m$, $\prod_{i=1}^m X_i$ is discretely generated. Then $\prod_{i=1}^\infty X_i$ is discretely generated.
\end{corollary}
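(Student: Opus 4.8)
The plan is to realize the countable product $\prod_{i=1}^\infty X_i$ as an inverse limit of the finite products and then invoke Theorem \ref{teoremadg}. Concretely, I would set $Y_m=\prod_{i=1}^m X_i$ for each $m\in\N$ and let $g_m^{m+1}\colon Y_{m+1}\to Y_m$ be the projection deleting the last coordinate, $g_m^{m+1}(x_1,\dots,x_{m+1})=(x_1,\dots,x_m)$. Each $g_m^{m+1}$ is continuous, so $\{Y_m;g_m^{m+1}\}_{m\in\N}$ is an inverse sequence, and the task reduces to showing its inverse limit is discretely generated and homeomorphic to the full product.

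First I would verify the hypotheses of Theorem \ref{teoremadg} for this sequence. Each $Y_m$ is regular, since a finite product of regular spaces is regular, and each $Y_m$ is discretely generated by assumption. Hence Theorem \ref{teoremadg} applies directly and gives that $\underleftarrow{\lim}\{Y_m;g_m^{m+1}\}_{m\in\N}$ is discretely generated.

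Next I would identify this inverse limit with $\prod_{i=1}^\infty X_i$. A point of the inverse limit is a sequence $(y_m)_m$ with $y_m\in Y_m$ and $g_m^{m+1}(y_{m+1})=y_m$; writing $y_m=(y_m^1,\dots,y_m^m)$, the compatibility condition says the first $m$ coordinates of $y_{m+1}$ agree with $y_m$, so there is a well-defined $x_i\in X_i$ with $x_i=y_m^i$ for all $m\ge i$. The assignment $(y_m)_m\mapsto (x_i)_{i=1}^\infty$ is a bijection onto $\prod_{i=1}^\infty X_i$, and under it a basic open set $\pi_m^{-1}(U_1\times\cdots\times U_m)$ of the inverse limit (see Proposition \ref{baseIL}) corresponds exactly to the basic open cylinder $\{(x_i)_i : x_1\in U_1,\dots,x_m\in U_m\}$ of the product. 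Thus the map is a homeomorphism, and therefore $\prod_{i=1}^\infty X_i$ is discretely generated. This is precisely the ``finite products plus inverse limits'' half of the meta-principle recalled above, specialized to the hereditary class DG.

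The argument is essentially routine once the reduction is in place; the only point demanding a little care is confirming that the inverse limit of the finite products is genuinely homeomorphic, and not merely in bijection, with the countable product, which I expect to be the main (though minor) obstacle. It is worth noting that this implication does not in fact use heredity of DG: heredity enters only in the converse direction of the meta-principle, where an inverse limit is viewed as a subspace of a countable product, whereas here we move in the opposite direction and need only that DG passes to inverse limits, which is exactly Theorem \ref{teoremadg}.
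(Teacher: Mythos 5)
Your proof is correct and takes essentially the same approach as the paper: the paper's one-line proof simply invokes the principle that a class closed under finite products and inverse limits is closed under countable products, which is exactly your reduction of $\prod_{i=1}^\infty X_i$ to the inverse limit of the finite products $\prod_{i=1}^m X_i$ followed by an application of Theorem \ref{teoremadg}. Your observation that heredity of DG is not actually needed here is also correct --- heredity only matters for the converse half of that principle, so your argument is, if anything, a slight sharpening of the paper's remark.
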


Now we show that $SS$ is preserved under inverse limits. We denote by $\mathcal{DS}(X)$ the collection  of all dense subsets of $X$. We start with an auxiliary result.

\begin{lemma}\label{lema00}
     Let $X_{\infty}=\underleftarrow{\lim}\{X_n;f_n^{n+1}\}$. Then , $E\subseteq X_{\infty}$ is dense, if and only if $\pi_n(E)\subseteq X_n$ is dense, for each $n\in\N$.
\end{lemma}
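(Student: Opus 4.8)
The plan is to reduce both implications to the characterization of density by basic open sets, using the base described in Proposition \ref{baseIL}: a set is dense in $X_\infty$ precisely when it meets every nonempty member of that base, that is, every nonempty set of the form $\pi_n^{-1}(V)$ with $V\su X_n$ open.

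For the implication from right to left, I would assume $\pi_n(E)$ is dense in $X_n$ for every $n\in\N$ and let $U\su X_\infty$ be a nonempty open set. By Proposition \ref{baseIL} there are $n\in\N$ and an open $V\su X_n$ with $\emptyset\neq \pi_n^{-1}(V)\su U$; in particular $V\neq\emptyset$. Since $\pi_n(E)$ is dense in $X_n$, there is $y\in E$ with $\pi_n(y)\in V$, that is, $y\in E\cap\pi_n^{-1}(V)\su E\cap U$. Hence $E$ meets every nonempty open subset of $X_\infty$, so $E$ is dense. This direction uses nothing beyond Proposition \ref{baseIL} and the definition of density.

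For the converse, I would assume $E$ is dense in $X_\infty$ and fix $n\in\N$ together with a nonempty open $V\su X_n$. The goal is to conclude that $\pi_n^{-1}(V)$ is a nonempty open subset of $X_\infty$, so that density of $E$ yields a point $y\in E\cap\pi_n^{-1}(V)$ and hence $\pi_n(y)\in \pi_n(E)\cap V$. Continuity of $\pi_n$ makes $\pi_n^{-1}(V)$ open, and density of $E$ then finishes the argument exactly as above, showing $\pi_n(E)$ meets every nonempty open $V\su X_n$.

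The one point that needs care, and which I expect to be the genuine obstacle, is establishing $\pi_n^{-1}(V)\neq\emptyset$ for a nonempty open $V$: this is equivalent to $\pi_n(X_\infty)$ being dense in $X_n$, and it fails in general if the bonding maps are not surjective (indeed, taking $E=X_\infty$ shows the statement itself forces $\pi_n(X_\infty)$ to be dense). Thus the forward implication rests on the standing hypothesis that the projections $\pi_n$ are surjective; under that assumption $\pi_n^{-1}(V)\neq\emptyset$ whenever $V\neq\emptyset$, and the proof goes through. I would make this use explicit at the corresponding step.
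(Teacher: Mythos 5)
Your proof is correct and follows essentially the same route as the paper's: both directions reduce to the base of sets $\pi_n^{-1}(V)$ given by Proposition \ref{baseIL}. Your closing observation is also well taken: the forward implication (and hence the statement itself, as your example $E=X_\infty$ shows) genuinely requires the projections to have dense images, and the paper's own proof carries the same implicit assumption --- its contrapositive argument ``if $U\cap\pi_n(E)=\emptyset$ then $\pi_n^{-1}(U)\cap E=\emptyset$'' only contradicts density of $E$ when $\pi_n^{-1}(U)\neq\emptyset$ --- which is harmless in the paper only because all the inverse limits it applies the lemma to have surjective bonding maps.
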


\begin{proof}
	If $U\cap \pi_n(E)=\emptyset$, for some $n\in\N$, then $\pi_n^{-1}(U)\cap E=\emptyset$. Conversely, let $V$ be an open set of $X_{\infty}$. Then, there are $n_0\in\N$ and $U\subseteq X_{n_0}$ open, such that $\pi_{n_0}^{-1}(U)\subseteq V$. Since $\pi_{n_0}(E)\in \mathcal{DS}(X_{n_0})$, $\pi_{n_0}(E)\cap U\neq\emptyset$. Therefore, $E\cap \pi_{n_0}^{-1}(U)\neq\emptyset$ and $E\in \mathcal{DS}(X_{\infty})$.
\end{proof}

\begin{theorem}\label{SS}
Let $X_{\infty}=\underleftarrow{\lim}\{X_n;f_n^{n+1}\}$, where $X_n$  is  $SS$. Then  $X_{\infty}$ is $SS$. Moreover, if each $X_n$ is hereditarely SS, then so is $X_\infty$. 
\end{theorem}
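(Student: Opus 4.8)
The plan is to reduce the statement to the concrete combinatorial characterization of $SS$ using the projection maps, leveraging Lemma \ref{lema00}. The main task is: given a sequence $(D_n)_n$ of dense subsets of $X_\infty$, produce finite $K_n \subseteq D_n$ so that $\bigcup_n K_n$ is dense in $X_\infty$. By Lemma \ref{lema00}, a set is dense in $X_\infty$ precisely when all its projections are dense, and moreover a basic open set of $X_\infty$ has the form $\pi_m^{-1}(U)$ for $U \subseteq X_m$ open (Proposition \ref{baseIL}). So it suffices to arrange that for every $m$ and every nonempty open $U \subseteq X_m$, the set $\bigcup_n K_n$ meets $\pi_m^{-1}(U)$; equivalently, the projection $\pi_m(\bigcup_n K_n) = \bigcup_n \pi_m(K_n)$ is dense in $X_m$ for each $m$.

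First I would fix, for each coordinate $m$, the dense sets $\pi_m(D_n)$ in $X_m$ (these are dense by Lemma \ref{lema00}, since each $D_n$ is dense in $X_\infty$). The natural idea is to run the $SS$ property of $X_m$ on a suitably indexed subfamily of these projected dense sets to extract finite selections whose union is dense in $X_m$, and then pull these finite selections back to finite subsets of the $D_n$. The bookkeeping is handled by splitting $\N$ into infinitely many infinite pieces $\N = \bigsqcup_m P_m$, dedicating the indices $n \in P_m$ to taking care of density in coordinate $X_m$. For each fixed $m$, the family $(\pi_m(D_n))_{n \in P_m}$ is a sequence of dense subsets of $X_m$; since $X_m$ is $SS$, there are finite sets $L_n \subseteq \pi_m(D_n)$ (for $n \in P_m$) with $\bigcup_{n \in P_m} L_n$ dense in $X_m$. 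Then I would choose, for each point of $L_n$, a single preimage in $D_n$, obtaining a finite set $K_n \subseteq D_n$ with $\pi_m(K_n) = L_n$.

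With $K_n$ defined this way for every $n$ (each $n$ lies in exactly one $P_m$), set $K = \bigcup_n K_n$. For a fixed coordinate $m$, we have $\pi_m(K) \supseteq \bigcup_{n \in P_m} \pi_m(K_n) = \bigcup_{n \in P_m} L_n$, which is dense in $X_m$; hence $\pi_m(K)$ is dense in $X_m$ for every $m$, and by Lemma \ref{lema00} $K$ is dense in $X_\infty$. This completes the first assertion. For the ``moreover'' clause, let $Y \subseteq X_\infty$ be an arbitrary countable (or indeed any) subspace; observe that $Y$ is itself the inverse limit of the subspaces $\pi_n(Y) \subseteq X_n$ under the restricted bonding maps, and each $\pi_n(Y)$ is $SS$ because $X_n$ is hereditarily $SS$. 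Applying the first part of the theorem to this inverse system gives that $Y$ is $SS$, so $X_\infty$ is hereditarily $SS$.

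I expect the main obstacle to be the verification that $Y$, as a subspace of $X_\infty$, is genuinely the inverse limit of $(\pi_n(Y); f_n^{n+1}\restriction \pi_{n+1}(Y))$ with the inherited topology matching the inverse-limit topology, so that the first part applies verbatim; this is routine but must be checked. A secondary subtlety is ensuring that the partition $\N = \bigsqcup_m P_m$ and the per-coordinate applications of $SS$ fit together cleanly, namely that assigning each index $n$ to a unique coordinate does not starve any coordinate of density. Since each $P_m$ is infinite and we use the full strength of $X_m$ being $SS$ on the subsequence indexed by $P_m$, density in every coordinate is secured simultaneously, and the argument goes through.
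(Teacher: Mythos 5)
Your proof of the first assertion is correct and is essentially the paper's own argument: the same partition of $\N$ into infinitely many infinite pieces, the same per-coordinate application of $SS$ to the projected dense sets, the same pullback of finite selections, and the same use of Lemma \ref{lema00} to conclude density. No issues there.

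The ``moreover'' part, however, contains a genuine gap, precisely at the step you flagged as ``routine but must be checked'': the claim that a subspace $Y\subseteq X_\infty$ \emph{is} the inverse limit $\underleftarrow{\lim}\{\pi_n(Y);f_n^{n+1}\restriction \pi_{n+1}(Y)\}$ is false in general. That inverse limit consists of \emph{all} threads $(y_n)_n$ with $y_n\in\pi_n(Y)$ and $y_n=f_n^{n+1}(y_{n+1})$, and this set can properly contain $Y$. For instance, if $Y$ is a proper subset of $X_\infty$ whose projections satisfy $\pi_n(Y)=X_n$ for every $n$ (e.g.\ $Y=X_\infty\setminus\{p\}$ for a non-isolated point $p$ each of whose coordinates $p_n$ is hit by some other point of $X_\infty$), then $\underleftarrow{\lim}\{\pi_n(Y)\}=X_\infty\supsetneq Y$. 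The issue is not the topology (the subspace topology on $Y$ does agree with the topology it inherits from $\underleftarrow{\lim}\{\pi_n(Y)\}$) but the underlying set. This matters because $SS$ is \emph{not} hereditary, as the paper notes in the introduction; so knowing that the possibly larger space $\underleftarrow{\lim}\{\pi_n(Y)\}$ is $SS$ does not by itself yield that $Y$ is $SS$, and your argument as written does not close.

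The gap is repairable with one extra ingredient: $Y$ is \emph{dense} in $W:=\underleftarrow{\lim}\{\pi_n(Y);f_n^{n+1}\restriction \pi_{n+1}(Y)\}$ (apply Lemma \ref{lema00} to that system: the projections of $Y$ are all of $\pi_n(Y)$, hence dense), and $SS$ is inherited by dense subspaces (given dense subsets $E_n$ of $Y$, they are dense in $W$, select finite $K_n\subseteq E_n$ with $\bigcup_n K_n$ dense in $W$; this union lies in $Y$ and is therefore dense in $Y$). With this, your first part applied to $W$ finishes the proof. The paper takes the closely related route of passing to closures: it invokes Engelking's Proposition 2.5.6 to get $\overline{Z}=\underleftarrow{\lim}\{\overline{\pi_n(Z)};f_n^{n+1}\restriction \overline{\pi_{n+1}(Z)}\}$ (an honest equality, unlike the one you asserted), concludes $\overline{Z}$ is $SS$ from the first part, and then uses the same dense-subspace inheritance to descend from $\overline{Z}$ to $Z$. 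Either way, the dense-subspace step is indispensable and is what your write-up is missing.
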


\begin{proof}
Let $(D_n)_{n\in\N}$ be a sequence in $\mathcal{DS}(X_{\infty})$. Let $\{\mathcal{L}_n : n\in\N\}$ be a partition of $\N$ such that $\mathcal{L}_n$ is infinite, for each $n\in\N$. Observe that $(\pi_i(D_n))_{n\in\mathcal{L}_i}$ is a sequence in $\mathcal{DS}(X_i)$, for each $i\in\N$. Fix $i\in\N$. Since $X_i$ is $SS$, there exists $K_n\subseteq \pi_i(D_n)$ finite, for each $n\in\mathcal{L}_i$, such that  $\bigcup_{n\in\mathcal{L}_i}K_n\in \mathcal{DS}(X_i)$. For each $n\in\mathcal{L}_i$, let $N_n\subseteq D_n$ finite, such that $\pi_i(N_n)=K_n$.  It is clear that  $\bigcup_{n\in\N}N_n$ is dense in $X_{\infty}$, by Lemma \ref{lema00}. 
%	Let $k\in\N$, and let $V$ be an open set of $X_{k}$. Since $\bigcup_{n\in\mathcal{L}_k}K_n$ belongs to $\mathcal{DS}(X_k)$, there exists $n_0\in\mathcal{L}_k$ such that $K_{n_0}\cap V\neq\emptyset$. Thus, $N_{n_0}\cap\pi_k^{-1}(V)\neq\emptyset$. Therefore, $\bigcup_{n\in\N}N_n$ is dense in $X_{\infty}$.

Suppose now that each $X_n$ is hereditarely SS and let $Z\subseteq X_{\infty}$.  Then  $\overline{Z}=\underleftarrow{\lim}\{\overline{\pi_n(Z)};f_n^{n+1}\restriction {\overline{\pi_{n+1}(Z)}}\}$ (see  \cite[Proposition 2.5.6, p.100]{engelking}). Hence,  $\overline{Z}$ is $SS$. Since $Z$ is dense in $\overline{Z}$, $Z$ is $SS$. Therefore, $X_{\infty}$ is hereditarely $SS$.
\end{proof}

%\begin{corollary}
%\label{corolarioss}
%Let $X_{\infty}=\underleftarrow{\lim}\{X_n;f_n^{n+1}\}$, where $X_n$ is {\red Hausdorff} and  %$X_n\setminus X_n'$ is a countable dense subset of $X_n$, for each $n\in\N$. Then every  %countable dense subset of $X_{\infty}$ is $SS$.
%\end{corollary}

%{\red Enunciarlo que si las proyecciones son densas para todo $n$ entonces el conjunto es denso}.
%\begin{lemma}\label{lema74}
%Let $X_{\infty}=\underleftarrow{\lim}\{X_n;f_n^{n+1}\}$, where $X_n\setminus X_n'$ is dense in $X_n$, for each $n\in\N$. Then, $E\subseteq X_{\infty}$ is dense, if and only if $X_n\setminus X_n'\subseteq \pi_n(E)$, for each $n\in\N$.
%\end{lemma}	
%
%\begin{proof}
%If $x\in (X_n\setminus X_n')\setminus \pi_n(E)$, for some $n$, then $\pi_n^{-1}(\{x\})$ is an open subset of $X_{\infty}$ such that $\pi_n^{-1}(\{x\})\cap E=\emptyset$, and $E$ is not dense. Conversely, if $X_{\infty}\setminus \overline{E}\neq\emptyset$, then there exist $k\in\N$ and $U_{k}$ an open subset of $X_k$ such that $\pi_k^{-1}(U_k)\subseteq X_{\infty}\setminus \overline{E}$. Thus, $\pi_k^{-1}(U_k)\cap E=\emptyset$. Therefore, $U_k\cap \pi_k(E)=\emptyset$. Since $X_n\setminus X_n'$ is dense, we have that  $X_n\setminus X_n'\nsubseteq \pi_k(E)$.	
%\end{proof}	

\section{Examples}

We  present examples of inverse limit of the form $\underleftarrow{\lim}\{Z,f\}$, where $Z$ is a countable space with only one accumulation point.  But first,  we collect some facts about inverse limits of countable spaces that have only one non isolated point. 

\subsection{Inverse sequences of filters}
Suppose $Z=\N\cup \{\infty\}$ is a space such that  $\infty $ is the only accumulation point.  Then  $\mathcal{F}_\infty=\{A\su \N:\; \infty \in int_Z(A\cup \{\infty\} )\}$ is  the neighborhood filter of $\infty$. 
Conversely, given a filter $\mathcal{F}$ over $\N$,  we define  a topology on $\N\cup \{\infty\}$  by declaring that  each $n\in \N$ is isolated and $\mathcal{F}$ is the neighborhood filter of $\infty$.  We denote this space by $Z(\mathcal{F})$.   This is done analogously  on any countable set $X$ instead of $\N$. 

Given two ideals $\mathcal{I}$ and $\mathcal{J}$ on countable sets $X$ and $Y$ respectively, we say that $\mathcal{I}$ is below $\mathcal{J}$ in the Kat\v{e}tov order, denoted by $\mathcal{I}\leq_K \mathcal{J}$, if there is $f:Y\rightarrow X$ such that $f^{-1}(E)\in \mathcal{J}$ for all $E\in \mathcal{I}$. For more information about this pre-order we refer the reader to \cite{GuzmanMeza2016, HMTU2013,  Hrusak2017}. Let $\mathcal{I}^*$ and $\mathcal{J}^*$  be the  corresponding dual filters. We will abuse the notation and consider $f:Z(\mathcal{J}^*)\rightarrow Z(\mathcal{I}^*)$  by letting $f(\infty)=\infty$, then  $f:Z(\mathcal{J}^*)\rightarrow Z(\mathcal{I}^*)$ is clearly continuous. Conversely, if there is  $f:Z(\mathcal{J}^*)\rightarrow Z(\mathcal{I}^*)$ continuous with $f(\infty)=\infty$  and $f(Y)\su X$, then  $\mathcal{I}\leq_K \mathcal{J}$.  

\begin{proposition}
	\label{closedopen}
	Given two ideals $\mathcal{I}$ and $\mathcal{J}$ on countable sets $X$ and $Y$ respectively. 
	Let $f:Y\rightarrow X$. 
	\begin{enumerate}
		\item If $f:Z(\mathcal{J}^*)\rightarrow Z(\mathcal{I}^*)$ is continuous, then $\mathcal{I}\leq_K \mathcal{J}$. 
		
		\item If $f$ witnesses  that $\mathcal{I}\leq_K \mathcal{J}$, then $f:Z(\mathcal{J}^*)\rightarrow Z(\mathcal{I}^*)$ is continuous.
		
		\item If  $f$ is onto and $f:Z(\mathcal{J}^*)\rightarrow Z(\mathcal{I}^*)$ is closed, then $\mathcal{J}\leq_K \mathcal{I}$ and $f$ is open. 
		
		\item  If  $f$ is injective  and $f:Z(\mathcal{J}^*)\rightarrow Z(\mathcal{I}^*)$ is open, then $\mathcal{J}\leq_K \mathcal{I}$ and $f$ is closed. 
		
	\end{enumerate}	
\end{proposition}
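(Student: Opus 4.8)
The plan is to exploit the tight dictionary between the Katětov order on ideals and topological properties of the one-point spaces $Z(\ideal^*)$, $Z(\mathcal{J}^*)$, already encoded in parts (1) and (2) of the proposition. Recall that in $Z(\mathcal{J}^*)$ every point of $Y$ is isolated and the only nonisolated point is $\infty$, whose neighborhoods are exactly the sets $U\cup\{\infty\}$ with $U\in\mathcal{J}^*$, i.e. $Y\setminus U\in\mathcal{J}$. Because all the action happens at $\infty$, each of the four topological notions—continuity, being closed, being open—reduces to a statement about how $f$ moves the ideals $\ideal$ and $\mathcal{J}$ around. So the whole proof should be a matter of unwinding the definitions of closed and open maps at the single relevant point and reading off the Katětov inequality and the dual map property.

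\textbf{Proof of (3).} Assume $f$ is onto and $f:Z(\mathcal{J}^*)\to Z(\ideal^*)$ is closed. First I would establish $\mathcal{J}\leq_K\ideal$. To witness this I need a map $g:X\to Y$ with $g^{-1}(E)\in\ideal$ for all $E\in\mathcal{J}$; the natural candidate is a right inverse of $f$ on the isolated points, fixing $\infty$. The key computation: take $E\in\mathcal{J}$, so $E$ is not a neighborhood-complement of $\infty$, and I must check $g^{-1}(E)\in\ideal$. The closedness of $f$ enters precisely here. Given $B\su Y$ with $\infty\notin\overline{B}$ (equivalently $B\in\mathcal{J}$ after removing $\infty$), the set $B$ is closed in $Z(\mathcal{J}^*)$, hence $f(B)$ is closed in $Z(\ideal^*)$; since $f$ is onto and fixes $\infty$, the statement $\infty\notin f(B)$ translates into $f(B)\in\ideal$. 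Running this through the chosen $g$ yields $\mathcal{J}\leq_K\ideal$. For openness of $f$, I would argue directly at $\infty$: a basic neighborhood of $\infty$ in $Z(\mathcal{J}^*)$ is $(Y\setminus C)\cup\{\infty\}$ with $C\in\mathcal{J}$, and I must show its image is a neighborhood of $\infty$ in $Z(\ideal^*)$, i.e. that $X\setminus f((Y\setminus C)\cup\{\infty\})\in\ideal$. Since $f$ is onto, this complement is contained in $f(C)$, which lies in $\ideal$ by the closed-map computation just performed; at isolated points openness is automatic. Hence $f$ is open.

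\textbf{Proof of (4).} This is the mirror image. Assume $f$ injective and $f:Z(\mathcal{J}^*)\to Z(\ideal^*)$ open. To get $\mathcal{J}\leq_K\ideal$ I would again produce a map $X\to Y$; injectivity lets me use the partial inverse $f^{-1}$ on $f(Y)$ and send the rest of $X$ anywhere, fixing $\infty$. The openness of $f$ says that for every $C\in\mathcal{J}$ the image of the neighborhood $(Y\setminus C)\cup\{\infty\}$ is a neighborhood of $\infty$, so its complement in $X$ lies in $\ideal$; feeding this through the partial inverse yields $f^{-1}(E)\in\ideal$ for the relevant $E$ and hence $\mathcal{J}\leq_K\ideal$. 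For closedness of $f$, I take $B\su Y$ closed in $Z(\mathcal{J}^*)$ and show $f(B)$ is closed; the only point at risk is $\infty$, so I must verify that if $\infty\notin\overline{B}$ then $\infty\notin\overline{f(B)}$. Using injectivity to control preimages, I would show $f(Y\setminus B)$ is (relatively) open via the open-map hypothesis, and conclude that $f(B)$ omits a neighborhood of $\infty$.

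\textbf{Main obstacle.} The routine parts are the manipulations at isolated points, which are automatic. The delicate point in both items is handling the interaction between surjectivity/injectivity and the behavior at $\infty$: for (3) one must be careful that $f$ being onto is exactly what lets $X\setminus f(\text{nbhd})$ be squeezed inside $f(C)$, and for (4) that injectivity is what makes the partial inverse a well-defined Katětov witness and lets one pass from openness of $f$ to the ideal membership without losing points. I expect the genuine content to lie in correctly translating ``$\infty\notin\overline{f(B)}$'' into an ideal-membership statement about $f(B)$ and verifying that the chosen witness map $g$ really satisfies $g^{-1}(E)\in\ideal$ for all $E$ in the source ideal rather than merely for complements of neighborhoods; keeping the free-ideal and non-triviality conventions straight throughout is where an error would most likely creep in.
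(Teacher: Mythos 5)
Your proposal follows essentially the same route as the paper's proof: for (3) you take a right inverse $g$ of $f$ as the Kat\v{e}tov witness and convert closedness at $\infty$ into ``$f(D)\in\mathcal{I}$ for all $D\in\mathcal{J}$'', and for (4) you take a (partial) left inverse and convert openness into ``$f(Y\setminus D)\in\mathcal{I}^*$''; this is exactly what the paper does via the one-line inclusions $g^{-1}(D)\subseteq f(D)$ (surjective case) and $f(Y\setminus D)\subseteq X\setminus g^{-1}(D)$ (injective case). The only point worth making explicit in your sketch for (4) is that letting $g$ send $X\setminus f(Y)$ ``anywhere'' is harmless only because openness of $f$ applied to the whole space gives $X\setminus f(Y)\in\mathcal{I}$, so these stray preimage points land inside an ideal set --- a verification the paper's inclusion $g^{-1}(D)\subseteq X\setminus f(Y\setminus D)$ absorbs automatically.
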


\proof (1) and (2) are straightforward. To see (3), let $g:X\rightarrow Y$ be such that $f\circ g=1_X$. Let $D\su Y$ in $\mathcal{J}$,  then $g^{-1}(D)\su f(D) $ for $D\su Y$.  Since $D$ is closed in  $Z(\mathcal{J}^*)$, then $f(D)$ is closed in $Z(\mathcal{I}^*)$. Hence $f(D)\in \mathcal{I}$ and thus $g^{-1}(D)\in \mathcal{I}$.  We have shown that  $\mathcal{J}\leq_K \mathcal{I}$. To see that $f$ is open, observe that  $X\setminus f(A)\su f(Y\setminus A)$ for all $A\su Y$.

For (4), let $g:X\rightarrow Y$ be such that $g\circ f =1_Y$. Let $D\su Y$, then $f(Y\setminus D)\su g^{-1}(Y\setminus D)=X\setminus g^{-1}(D)$. As $f$ is open, if  $D\in \mathcal{J}$, then  $f(Y\setminus D)\in \mathcal{I}^*$, thus $g^{-1}(D)\in \mathcal{I}$. We have shown that  $\mathcal{J}\leq_K \mathcal{I}$. To see that $f$ is closed, observe that  $f(Y\setminus A)\su X\setminus f(A)$ for all $A\su Y$.
\endproof

From the previous observation we get that any sequence of ideals $(\mathcal{I}_n)_n$ together  with functions $f_n$ witnessing that $\mathcal{I}_n\leq_K \mathcal{I}_{n+1}$ corresponds to an inverse sequence  $\{ Z(\mathcal{I}_n^*), f_n\}_n$.   There are strictly increasing transfinite sequences in the Kat\v{e}tov order of length $\omega_1$ (see \cite{GuzmanMeza2016}). Next proposition says that any countable increasing sequence has an upper bound which can be defined using an inverse limit.

\begin{proposition}
	Let $\{\mathcal{I}_n, f_n\}$ be a $\leq_K$-increasing  sequence of ideals  over $\N$. Let $X_\infty$ be the inverse limit of $\{ Z(\mathcal{I}_n^*), f_n\}_n$. Let $p\in X_\infty$ be the constant sequence $\infty$. There is a countable discrete set $D\su X_\infty\setminus \{p\}$ such that $p\in \overline{D}$ and 
	\[
	\mathcal{I}_n\leq_K \mathcal{J}_p
	\]
	for all $n$, where $\mathcal{J}_p$ is the dual ideal of the neighborhood filter of $p$ in the space $D\cup\{p\}$. 
	
\end{proposition}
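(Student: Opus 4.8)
The plan is to use two features of the spaces $Z(\mathcal{I}_n^*)$: each natural number is isolated, and the bonding maps $f_n\colon Z(\mathcal{I}_{n+1}^*)\to Z(\mathcal{I}_n^*)$ are the Kat\v{e}tov witnesses extended by $f_n(\infty)=\infty$, so $f_n(\N)\su\N$ and $f_n^{-1}(\infty)=\{\infty\}$. The first step I would record is the resulting dichotomy for threads: if $d=(d_n)_n\in X_\infty$ and $d_n=\infty$ for some $n$, then $d_m=\infty$ for all $m$ (upward because $f_n(\infty)=\infty$, downward because $f_n^{-1}(\infty)=\{\infty\}$). Hence either $d=p$, or every coordinate of $d$ lies in $\N$; in particular $\pi_n(X_\infty\setminus\{p\})\su\N$ for all $n$, and $p$ is the only point of $X_\infty$ that any projection sends to $\infty$. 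Combining this with the fact that, by Proposition \ref{baseIL}, the sets $\pi_n^{-1}(W\cup\{\infty\})$ with $\N\setminus W\in\mathcal{I}_n$ form a neighborhood base at $p$, I obtain the convenient criterion: for $D\su X_\infty\setminus\{p\}$ one has $p\in\overline{D}$ if and only if $\pi_n(D)\notin\mathcal{I}_n$ for every $n$.

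With the dichotomy in hand the Kat\v{e}tov conclusion is almost immediate, so I would dispatch it next. Suppose a countable discrete $D\su X_\infty\setminus\{p\}$ with $p\in\overline{D}$ has been found. Fix $n$ and consider $g_n=\pi_n\restriction(D\cup\{p\})$. Viewing $D\cup\{p\}$ as $Z(\mathcal{J}_p^*)$ with distinguished point $p$, the dichotomy says $g_n(p)=\infty$ and $g_n(D)\su\N$, and $g_n$ is continuous as a restriction of $\pi_n$. Proposition \ref{closedopen}(1) then gives $\mathcal{I}_n\leq_K\mathcal{J}_p$, and since $n$ was arbitrary this is exactly the inequality required.

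It remains to construct $D$, which is the substantive part. First I would produce a countable set accumulating at $p$: using that the bonding maps are onto, for each pair $(n,w)\in\N\times\N$ I can choose a thread $d^{(n,w)}\in X_\infty$ with $d^{(n,w)}_n=w$ (fix the $n$-th coordinate to be $w$, go up by surjectivity of $f_n,f_{n+1},\dots$ and down by applying $f_{n-1},\dots,f_0$). The countable set $D_0=\{d^{(n,w)}:n,w\in\N\}$ satisfies $\pi_n(D_0)=\N\notin\mathcal{I}_n$ for every $n$, so $p\in\overline{D_0}$ by the criterion above. Next, each $Z(\mathcal{I}_n^*)$ is regular (it is zero-dimensional: points of $\N$ are isolated and each basic neighborhood of $\infty$ is clopen) and discretely generated (the relevant case being $\infty\in\overline{A}\setminus A$, where the discrete set $A\cap\N$ already accumulates at $\infty$); hence, by Theorem \ref{teoremadg}, $X_\infty$ is discretely generated. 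Applying discrete generation to the set $D_0$ and the point $p\in\overline{D_0}$ yields a discrete $D\su D_0\su X_\infty\setminus\{p\}$ with $p\in\overline{D}$, and $D$ is countable since $D_0$ is.

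The main obstacle is the construction of $D_0$, i.e. exhibiting $p$ as a non-isolated point with a countable witness; this is exactly where surjectivity of the bonding maps enters. Without it the inverse limit can degenerate (for instance the shift $k\mapsto k+1$ on a convergent sequence leaves only the thread $p$), so onto bonding maps are what guarantee enough all-$\N$ threads clustering at $p$. Everything else reduces to the dichotomy, Theorem \ref{teoremadg}, and Proposition \ref{closedopen}(1).
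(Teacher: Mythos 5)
Your proof is correct (granting the hypothesis you add), and it takes a genuinely different --- and in fact more careful --- route than the paper. The paper's proof is a selector argument: pick one thread $x_n\in\pi_1^{-1}(n)$ for each $n$, note that $D=\{x_n:n\in\N\}$ is discrete because $\pi_1^{-1}(n)$ isolates $x_n$, assert that ``clearly'' $p\in\overline{D}$, and then obtain $\mathcal{I}_n\leq_K\mathcal{J}_p$ from $\pi_n\restriction D$, which is the same Kat\v{e}tov step you perform via Proposition~\ref{closedopen}(1). You differ in two places. First, you assume the bonding maps are onto; the statement assumes nothing of the sort, but some such hypothesis is necessary: as your shift example shows, the literal statement fails when $X_\infty$ degenerates to $\{p\}$, and even the paper's proof implicitly presupposes $\pi_1^{-1}(n)\neq\emptyset$ for all $n$. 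Second, and more substantively, where the paper says ``clearly $p\in\overline{D}$'', your construction does real work: you take threads through \emph{every} value at \emph{every} level, so that $\pi_n(D_0)=\N\notin\mathcal{I}_n$ for all $n$, and then your closure criterion (correctly derived from Proposition~\ref{baseIL} and the thread dichotomy) applies. This extra care is not optional. Take $\mathcal{I}_1=\fin$ and $\mathcal{I}_k=\fin\times\fin$ for $k\geq 2$, with bonding maps the projection $(n,m)\mapsto n$ followed by identities (all onto): any selector-type set $D$ as in the paper has $\pi_2(D)$ meeting each column $\{n\}\times\N$ in at most one point, so $\pi_2(D)\in\fin\times\fin$ and the basic neighborhood $\pi_2^{-1}\bigl((\N^2\setminus\pi_2(D))\cup\{\infty\}\bigr)$ of $p$ misses $D$; hence $p\notin\overline{D}$ no matter how the $x_n$ are chosen, so the paper's method cannot be repaired without enlarging $D$ exactly as you do. The price you pay is that your $D_0$ is no longer automatically discrete, which you settle by quoting Theorem~\ref{teoremadg} (legitimate, since it is proved before this proposition); the paper gets discreteness for free from its flawed selector. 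Net comparison: the paper's argument buys brevity and automatic discreteness but has a genuine gap at its central claim; yours buys a complete argument at the cost of an explicitly flagged (and necessary) surjectivity hypothesis and an appeal to discrete generation.
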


\proof
For each $n\in \N$, let $x_n\in \pi^{-1}_1(n)$ and  $D=\{x_n:\; n\in \N\}\cup \{ p\}$. Clearly $D$ is discrete and  $p\in \overline{D}$. Let $f_n=\pi_n\restriction D: D\rightarrow \pi_n(D)$. Then $f_n$ witness that  $\ideal_n\restriction \pi_n(D)\leq_K \mathcal{J}_p$ for all $n$.  Clearly  $\ideal_n\leq_K \ideal_n\restriction \pi_n(D)$  and therefore
$\mathcal{I}_n\leq_K \mathcal{J}_p$ for all $n$. 
\endproof

Given a filter $\mathcal{F}$, the following result will be used to  construct a countable spaces $X$ without isolated points such that $Z(\mathcal{F})$ embeds into $X$. 

\begin{proposition}
	\label{embedding}
	Let  $\mathcal{F}$ be a filter on $\N$ and $Z=Z(\mathcal{F})$.  Let $f:Z\rightarrow Z$ be a continuous and closed surjection such that  $f(\infty)=\infty$. Let $X_\infty=\underleftarrow{\lim}\{Z,f\}$. For each $n\in \N$, pick $x_n\in \pi^{-1}_1(n)$ and let $Y=\{x_n:\; n\in \N\}\cup \{ p\}$, where $p\in X_\infty$ is the constant sequence $\infty$. Then the map $i: Z\rightarrow X_\infty$ given by $i(n)=x_n$ and $i(\infty)=p$ is an embedding.
\end{proposition}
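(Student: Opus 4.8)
The plan is to prove that $i$ is a continuous injection whose inverse $i^{-1}\colon Y\to Z$ is continuous, which is exactly the assertion that $i$ is an embedding. The injectivity of $i$ and the continuity of $i^{-1}$ come essentially for free from the identity $\pi_1\circ i=\mathrm{id}_Z$: indeed $\pi_1(i(n))=\pi_1(x_n)=n$ and $\pi_1(i(\infty))=\pi_1(p)=\infty$, so $i$ has a left inverse, whence it is injective, and $i^{-1}$ is just the restriction $\pi_1\restriction Y$ of the continuous projection $\pi_1$, hence continuous.

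The real content is the continuity of $i$. Since the sets $\pi_n^{-1}(V)$ with $V\subseteq Z$ open form a base of $X_\infty$ (Proposition \ref{baseIL}), $i$ is continuous iff each $g_n:=\pi_n\circ i\colon Z\to Z$ is continuous. Every $g_n$ fixes $\infty$ and is automatically continuous at the isolated points of $\N$, so the whole problem reduces to continuity of $g_n$ at $\infty$, and there only to open $V$ with $\infty\in V$ (for $V\subseteq\N$ the preimage $g_n^{-1}(V)\subseteq\N$ is trivially open). Composing $\pi_1=f^{n-1}\circ\pi_n$ (where $f^{n-1}=f_1^n$ is the $(n-1)$-fold iterate of $f$) with $\pi_1\circ i=\mathrm{id}_Z$ gives $f^{n-1}\circ g_n=\mathrm{id}_Z$; that is, $g_n$ is a section of $f^{n-1}$.

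To treat continuity at $\infty$ I would first extract from the hypotheses the key structural fact that $f$ maps $\N$ into $\N$. Because $f$ is a continuous closed surjection of $Z=Z(\mathcal F)$ onto itself, Proposition \ref{closedopen}(3) (applied with both ideals equal to the dual ideal of $\mathcal F$) shows that $f$ is moreover open; and an open map cannot carry an isolated point $k$ to the non-isolated point $\infty$, since otherwise $\{\infty\}=f(\{k\})$ would be open. Hence $f(\N)\subseteq\N$, and so $f^{n-1}(\N)\subseteq\N$. Now fix open $V\ni\infty$, and recall that a set $C\subseteq\N$ is closed in $Z$ iff $\N\setminus C\in\mathcal F$. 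It is enough to show that $C:=g_n^{-1}(Z\setminus V)$ is a closed subset of $\N$, since then $g_n^{-1}(V)=(\N\setminus C)\cup\{\infty\}$ is open. If $m\in C$ then $g_n(m)\in Z\setminus V\subseteq\N$ and $f^{n-1}(g_n(m))=m$, so $m\in f^{n-1}(Z\setminus V)$; thus $C\subseteq f^{n-1}(Z\setminus V)$. As $Z\setminus V$ is closed and contained in $\N$, and $f^{n-1}$ is closed with $f^{n-1}(\N)\subseteq\N$, the image $f^{n-1}(Z\setminus V)$ is a closed subset of $\N$, hence so is $C$. This gives continuity of $g_n$ at $\infty$ for every $n$, and therefore continuity of $i$.

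I expect this last step to be the main obstacle: the argument succeeds only because the closed image $f^{n-1}(Z\setminus V)$ stays inside $\N$, so that being closed forces being small (its complement in $\N$ lies in $\mathcal F$). This is exactly where closedness of $f$ is indispensable — through Proposition \ref{closedopen} it yields $f(\N)\subseteq\N$, which can fail for a merely continuous surjection. With $i$ continuous and $i^{-1}=\pi_1\restriction Y$ continuous, we conclude that $i$ is an embedding.
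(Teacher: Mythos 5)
Your proof is correct, and it takes a genuinely different route from the paper's. The paper proves a stronger, global fact: using both openness of $f$ (obtained from Proposition \ref{closedopen}(3)) and closedness, it shows that the sets $\pi_1^{-1}(V)$, for $V$ an open neighborhood of $\infty$ in $Z$, form a neighborhood base at $p$ in the whole space $X_\infty$; given a basic open set $\pi_k^{-1}(W)\ni p$, it exhibits $V=f^{k-1}(W)\setminus f^{k-1}(U\setminus W)$, where $U=(f^{k-1})^{-1}(f^{k-1}(W))$, with $\pi_1^{-1}(V)\subseteq \pi_k^{-1}(W)$, and then reads off the embedding by computing the trace of these sets on $Y$. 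You work only on $Y$: the embedding is reduced to continuity of the coordinate maps $g_n=\pi_n\circ i$, which are sections of $f^{n-1}$, and continuity at $\infty$ follows from the inclusion $g_n^{-1}(Z\setminus V)\subseteq f^{n-1}(Z\setminus V)$ together with the fact that a subset of $\N$ is closed in $Z$ iff its complement lies in $\mathcal{F}$, so that subsets of closed subsets of $\N$ are again closed. What each buys: the paper's argument needs openness of $f$ essentially (the set $f^{k-1}(W)$ must be open) but yields a description of the neighborhood base of $p$ in all of $X_\infty$, i.e., the analogue of Proposition \ref{enlacebiyectiva} for non-bijective bonding maps; your argument uses openness only to get $f(\N)\subseteq\N$ and is otherwise purely ideal-theoretic, but gives no information about points of $X_\infty$ outside $Y$.

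One wrinkle, which you share with the paper, deserves mention. Proposition \ref{closedopen} concerns a map $f\colon Y\to X$ of the underlying countable sets, extended by $f(\infty)=\infty$; its hypotheses therefore already include $f(\N)\subseteq\N$, which is not among the hypotheses of Proposition \ref{embedding}. A continuous closed surjection of $Z$ fixing $\infty$ can in fact move integers to $\infty$: on the convergent sequence, the map with $f(0)=\infty$ and $f(n)=n-1$ for $n\geq 1$ is continuous, closed and onto, but not open. So citing Proposition \ref{closedopen}(3) to conclude that $f$ is open, and hence that $f(\N)\subseteq\N$, is circular as written; since this is exactly the paper's own move, it is not a defect of your proof relative to the paper's. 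It is worth recording, though, that your route (unlike the paper's) repairs at no cost: replace $Z\setminus V$ by $W=(Z\setminus V)\setminus (f^{n-1})^{-1}(\infty)$. Every $m\in C=g_n^{-1}(Z\setminus V)$ satisfies $f^{n-1}(g_n(m))=m\neq\infty$, so still $C\subseteq f^{n-1}(W)$; now $W$ is a closed subset of $\N$, hence $f^{n-1}(W)$ is closed and omits $\infty$ by construction, hence is a closed subset of $\N$, and your argument concludes as before with no appeal to Proposition \ref{closedopen} at all. In the paper's construction, by contrast, $f^{-1}(\infty)=\{\infty\}$ is indispensable for guaranteeing $\infty\in V$, so it cannot dispense with that hypothesis in the same way.
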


\proof 
First, notice that each $x_n$ is isolated in $Y$, since  $\{x_n\}=\pi_1^{-1}(\{n\})\cap Y$.  Now we show that the collection of sets $\pi_1^{-1}(V)$, for $\infty\in V\su Z$ open, is a basis for $p$ in  $X_\infty$.   Since $f$ is continuous and onto, $f$ is open, by Proposition \ref{closedopen}. Thus, $f^{-1}(\infty)=\{\infty\}$.

Let $W\su Z$ open with $\infty \in W$ and $k\in \N, k>1$. Then $\pi_k^{-1}(W)$ is a basic open set containing $p$. We will show that there is $V\su Z$ open such that $\infty\in V$ and  $\pi_1^{-1}(V)\su \pi_k^{-1}(W)$. Let $U=(f^{k-1})^{-1}(f^{k-1}(W))$. Then $U$ is open and $\infty\in U$. Let $A=U\setminus  W$ notice that $\infty\not\in A$ and $A$ is closed (note that $A$ may be empty). Then  $V=f^{k-1}(W)\setminus f^{k-1}(A)$ is open and $\infty\in V$.  

We see that $\pi_1^{-1}(V)\su \pi_k^{-1}(W)$. Let $x\in X_\infty$ such that $\pi_1(x)\in V$ and suppose that $\pi_k(x)\notin W$. Since $f^{k-1}(\pi_k(x))=\pi_1(x)$ and $\pi_1(x)\in f^{k-1}(W)$, we have that $\pi_k(x)\in (f^{k-1})^{-1}(f^{k-1}(W))=U$. Thus, $\pi_k(x)\in U\setminus W=A$. Since $f^{k-1}(\pi_k(x))=\pi_1(x)$, $\pi_1(x)\in f^{k-1}[A]$. A contradiction.

To see that  $i$ is an embedding, we have to show that $A\in \mathcal{F}$  iff $\{x_n: \; n\in A\}\cup \{p\}$ is open in $Y$,  for any $A\su \N$.  In fact, $\{x_n: \; n\in A\}\cup \{p\}$ is open in $Y$ iff there is $B\in \mathcal{F} $ such that $\pi^{-1}_1(B \cup \{\infty\})\cap Y\su \{x_n: \; n\in A\}\cup \{p\}$ iff there is $B\in\mathcal{F}$ such that $B\su A$. 
\endproof

\begin{proposition}
	\label{enlacebiyectiva}
	Let $\mathcal{F}$ be a  filter on $\N$ and $Z=Z(\mathcal{F})$.  Let $f:Z\rightarrow Z$ be a continuous bijection. Let $p\in X_\infty$ be the constant sequence $\infty$.  Then $p$ is the only non isolated point of  $X_\infty=\underleftarrow{\lim}\{Z,f\}$.  The neighborhood filter, $\mathcal{F}_p$,  of $p$  is  generated by the sets $\pi^{-1}_1(f^{k}(W))$ for $W\in \mathcal{F}_\infty$ and $k\in \N$. Moreover,    
	\[
	\mathcal{F}_p=\pi^{-1}_1(\bigcup_k f^{k}(\mathcal{F})).
	\]
\end{proposition}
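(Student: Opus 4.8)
The plan is to first extract the structural consequences of $f$ being a \emph{continuous bijection} of $Z=Z(\mathcal{F})$, and then read off all three assertions. First I would observe that $f(\infty)=\infty$: since (by the standing assumption that $\mathcal{F}$ is free) $\infty$ is the unique non-isolated point and every $n\in\N$ is isolated, if $f(\infty)=n\in\N$ then $f^{-1}(\{n\})=\{\infty\}$ would have to be open by continuity, contradicting that $\infty$ is not isolated. Hence $f\restriction\N$ is a bijection of $\N$ and each $f^{k}$ is a bijection of $Z$ fixing $\infty$ and carrying $\N$ onto $\N$. Continuity of $f$ then gives $f^{-1}(A)\in\mathcal{F}$ for every $A\in\mathcal{F}$ (pull back the open neighbourhood $\{\infty\}\cup A$), whence $A=f(f^{-1}(A))\in f(\mathcal{F})$ and therefore $\mathcal{F}\su f(\mathcal{F})\su f^{2}(\mathcal{F})\su\cdots$. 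In particular $\bigcup_k f^{k}(\mathcal{F})$ is an increasing union of filters, hence itself a filter on $\N$; this is exactly what makes the formula in the last assertion meaningful.

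Next I would prove that every thread $(x_n)_n\in X_\infty$ is \emph{pure}: either $x_n=\infty$ for all $n$ (giving $p$) or $x_n\in\N$ for all $n$. This is immediate from the relation $x_n=f(x_{n+1})$ together with the facts that $f$ and $f^{-1}$ both fix $\infty$ and map $\N$ into $\N$; one coordinate equal to $\infty$ forces, by induction upward and downward, the entire thread to be $\infty$, and likewise for $\N$. From purity the first assertion follows quickly. If $x\neq p$, then $x_1\in\N$, and since $f$ is injective the equations $x_n=f(x_{n+1})$ determine the whole thread from $x_1$; thus $\pi_1^{-1}(\{x_1\})=\{x\}$ is an open set isolating $x$. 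On the other hand $p$ is not isolated: by Proposition \ref{baseIL} a basic neighbourhood of $p$ has the form $\pi_k^{-1}(V)$ with $V\su Z$ open and $\infty\in V$; since $\mathcal{F}$ is free, $V\cap\N$ is infinite, and for each $n\in V\cap\N$ the unique thread $x^{(n)}$ with $\pi_k(x^{(n)})=n$ lies in $\pi_k^{-1}(V)\setminus\{p\}$.

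For the description of $\mathcal{F}_p$ I would exploit the identity $\pi_1=f^{k-1}\circ\pi_k$, valid because $x_1=f^{k-1}(x_k)$ along any thread. As $f^{k-1}$ is a bijection, this yields $\pi_k^{-1}(V)=\pi_1^{-1}(f^{k-1}(V))$ for every $V\su Z$. Combined with Proposition \ref{baseIL}, the sets $\pi_1^{-1}(f^{k}(V))$, for $V$ open with $\infty\in V$ and $k\geq 0$, form a neighbourhood base at $p$. This gives the stated generating family once one checks that $\pi_1^{-1}(f^{k}(W))$ is a neighbourhood of $p$ for \emph{every} $W\in\mathcal{F}_\infty$, not merely for open $W$: it contains the open set $\pi_1^{-1}(f^{k}(V))=\pi_{k+1}^{-1}(V)$ for any open $V$ with $\infty\in V\su W$. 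Finally, writing $W=\{\infty\}\cup A$ with $A\in\mathcal{F}$ and using $f^{k}(\infty)=\infty$ gives $f^{k}(W)=\{\infty\}\cup f^{k}(A)$ with $f^{k}(A)\in f^{k}(\mathcal{F})$; as $k$ and $A$ vary, $f^{k}(A)$ ranges over $\bigcup_k f^{k}(\mathcal{F})$, which is precisely the assertion $\mathcal{F}_p=\pi_1^{-1}(\bigcup_k f^{k}(\mathcal{F}))$.

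The main obstacle is that $f$ is assumed only to be a \emph{continuous bijection}, not a homeomorphism, so $f^{-1}$ need not be continuous and $f^{k}(W)$ need not be open; one therefore cannot simply push open neighbourhoods forward. The delicate points are to verify that each $\pi_1^{-1}(f^{k}(W))$ is nonetheless a neighbourhood of $p$ (via $\pi_{k+1}^{-1}(V)=\pi_1^{-1}(f^{k}(V))$), and that the chain $\mathcal{F}\su f(\mathcal{F})\su\cdots$, obtained from continuity alone, makes $\bigcup_k f^{k}(\mathcal{F})$ a genuine filter. A minor bookkeeping issue is the index range of $k$: if $\N$ starts at $1$, the level $f^{0}=\mathrm{id}$ is recovered from level $1$ by writing $\pi_1^{-1}(W)=\pi_1^{-1}(f(f^{-1}(W)))$, using that $f$ is a continuous surjection, so that $f^{-1}(W)\in\mathcal{F}_\infty$ and $f(f^{-1}(W))=W$.
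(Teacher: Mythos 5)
Your proof is correct and follows essentially the same route as the paper's: bijectivity of $f$ makes each $\pi_k$ a bijection (so all points other than $p$ are isolated), Proposition \ref{baseIL} together with the identity $\pi_1^{-1}(f^{k-1}(A))=\pi_k^{-1}(A)$ yields the generating family, and continuity gives $\mathcal{F}\su f(\mathcal{F})$, so that $\bigcup_k f^k(\mathcal{F})$ is an increasing union of filters and hence a filter. You simply make explicit several steps the paper leaves implicit, such as deriving $f(\infty)=\infty$ and the ``purity'' of threads, which is a reasonable elaboration rather than a different argument.
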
	

\proof  Since $f$ is bijective, then  each $\pi_k$ is bijective, $X_\infty$ is countable and each $\pi^{-1}(n)$ is an isolated point. On the other hand, by Proposition \ref{baseIL}, $\mathcal{F}_p$ is generated by the collection of sets  $\pi_k^{-1}(W)$ for $W\su Z$ open with $\infty \in W$ and $k\in \N$. 
Since $f^{k-1}\circ \pi_k=\pi_1$, then $\pi_1^{-1}(f^{k-1}(A))=\pi_k^{-1}(A)$ for all $A\su Z$ and  we are done.  To verify  the last claim, we observe that, by the continuity of  $f$,  $\mathcal{F}\su f(\mathcal{F})$,  hence  $f^k(\mathcal{F}) \su f^{k+1}(\mathcal{F})$. Thus $ \bigcup_k \pi^{-1}_1(f^{k}(\mathcal{F}))$ is a filter and we are done. 
\endproof

\subsection{Some examples}

\begin{example}
\label{ejem-lastres}
There exists a   regular crowded countable space $X$ with analytic topology such that  it is \qq, discretely generated, $SS$ and  does not have countable fan-tightness.
\end{example}
The space $X$ will be a subspace of an  inverse limit of the form $\underleftarrow{\lim}\{Z,f\}$.  We first define the space $Z$. Consider the following ideal on $\N\times\N$: 
\[
\fin\times\fin=\{A\su\N\times\N:\; \{n\in \N:\; \{m\in\N:\; (n,m)\in A\}\not \in \fin\}\in \fin \}.
\]
Let $Z=Z(\mathcal{F})$ where $\mathcal{F}$ is the  dual filter of $\fin\times\fin$. It is clear that the topology of $Z$ is analytic.   To check that $Z$ is \qq,  it suffices to observe that it is (homeomorphic to) a subspace of Arens space $S_2$. On the other hand,  $\fin\times\fin$ is not \pp, in fact, consider the sets $A_n=\bigcup_{k\geq n} \{k\}\times \N\in \fin\times\fin$.  Hence $Z$  has no countable fan-tightness.

Let  $f\colon Z\to Z$  be any function such that:
\begin{enumerate}
	\item[(a)] $f(\infty)=\infty$;
	\item[(b)]\label{ii} $f^{-1}(\{n\}\times\N)=\{n\}\times\N$, for each $n$, and $f^{-1}((n,m))$ has two points for each $(n,m)\in \N^2$.
\end{enumerate}
Let $f_n^{n+1}=f$, for each $n\in\N$. It is not difficult to see that $f$ is a continuous, closed and open surjection. Let $X_\infty =\underleftarrow{\lim}\{Z,f\}$.  By Proposition \ref{inverseAnalytic}, Theorems \ref{teoremaq}, \ref{teoremadg}, and \ref{SS}, every countable subspace of  $X_\infty$ has a $F_\sigma$ basis,   is  \qq,  discretely generated and $SS$.

By Proposition \ref{embedding}, there is an embedding $i\colon Z\to X_\infty$ such that $|\pi_1^{-1}(n,m)\cap i(Z)|=1$, for each $(n,m)\in\N\times\N$. By (b), $X_\infty$ has no isolated points, so let $D_{n,m}\su \pi^{-1}_1(n,m)$ be a countable set without isolated points, such that $|D_{n,m}\cap i(Z)|=1$, for each $(n,m)\in \N\times\N$. Let $X=\bigcup_{(n,m)\in\N\times\N}D_{(n,m)}\cup\{p\}$. Since $i(Z)\subseteq X$ and $i(Z)$ has not countable fan tightness, then  $X$ neither has it.

\bigskip
%
%We   prove next  that $X$ is not $\Sq$.  Since  $Z$ is not $\Sq$, it suffices to show that there is $Y\su X$ homeomorphic to $Z$. For each $(n,m)\in\N^2$, let $x_{n,m}\in X\cap \pi_1^{-1}[(n,m)]$. Let $Y=\{p\}\cup\{x_{n,m} : (n,m)\in\N^2\}$, where $p=(\infty,\infty,...)$.  Since $\pi_1^{-1}[(n,m)]$ is an open subset of $X$, $\{x_{n,m} : (n,m)\in\N^2\}$ is a discrete subset of $Y$.  We will show that $Y$ is homeomorphic to $Z$. We have  to show that  $O$ is an open subset of $Y$ with $p\in O$, if and only if  
%\[
%A_O=\{(n,m)\in\N\times\N:\; x_{n,m}\not\in O\}\in \fin\times\fin.
%\]
%Suppose $O$ is open  in $Y$.  We can assume that $O=\pi^{-1}_k (W)\cap Y$ for some open set $W\su Z$ with $\infty\in W$ and some $k\in\N$. Moreover, by (b)  we can also assume that $W=f^{-k}[f^{k}[W]]$.  Let $V=f^k(W)$, then $V$ is open in $Z$ and $\infty\in V$. Since $W=f^{-k}[f^{k}[W]]$,  we have that  $\pi_1^{-1}(V)\su \pi_k^{-1}(W)$.  Thus, if $(n,m)\in V$, then $x_{n,m}\in O$. From this and the fact that  $V$ is open  in $Z$, we conclude 
%%
%\[
%\{(n,m)\in\N\times\N:\; x_{n,m}\not\in O\}\su \{(n,m)\in\N\times\N:\; (n,m)\not\in V\}\in \fin\times\fin.
%\]
%Conversely, suppose $A_O\in\fin\times\fin$ for $O\su Y$ with $p\in O$. We have to show that $O$ is open.   Let $W=Z\setminus A_O$. Then $W$ is open in $Z$ and $\infty\in W$.  It is clear that  $\pi_1^{-1}(W)\cap Y \su O$. 
%
%\bigskip

\begin{example}
	\label{ejemplo2}
There exists a regular  crowded countable space $X$  with a $F_\sigma$ basis  such that it is  discretely generated, $SS$,  has countable fan-tightness and  is not \qq.
\end{example}

We first define an ideal on $\N$. Let $(L_n)_n$ be a partition of $\N$ such that $|L_n|=2^n$ for all $n\in \N$.  Consider the following ideal on $\N$: 
\[
\ideal=\{A\su\N:\; \exists k\in\N, \forall n\in \N\; |A\cap L_n|\leq k\, \}.
\]
Let $Z=Z(\mathcal{I}^*)$. It is clear that the topology of $Z$ is analytic.    It is obvious that  $Z$ is not \qq.  Notice that $\ideal$ is a $F_\sigma$ subset of $\cantor$, therefore $Z$ is \pp\ (this is well known, see for instance, \cite[Lemma 3.3]{HMTU2013}). Moreover,  by Proposition \ref{inverseAnalytic},  every countable subspace of $X_\infty$ has a $F_\sigma$ basis. 

Let  $f\colon Z\to Z$  be any surjective function such that:
\begin{enumerate}
	\item[(a)] $f(\infty)=\infty$;
	\item[(b)]\label{ii} $f^{-1}(L_{n})=L_{n+1}$, for each $n$, and $f^{-1}(x)$ has two points for each $x\in \N$.
\end{enumerate}
Let $f_n^{n+1}=f$, for each $n\in\N$. It is not difficult to see that $f$ is a continuous, closed and open map. Let $X_\infty =\underleftarrow{\lim}\{Z,f\}$.  It is obvious that $Z$ is hereditarely SS. By Proposition \ref{inverseAnalytic}, Theorems \ref{teoremadg} and \ref{SS}, every countable subspace of  $X_\infty$  has analytic topology, is discretely generated and $SS$.

By (b), $X_\infty$ has no isolated points. For each $n\in \N$, let $D_n\su \pi^{-1}_1(n)$ be  a countable set without isolated points.  Let $X=\bigcup_{n\in\N}D_n\cup\{p\}$. By   Proposition \ref{embedding}, there is an embedding from $Z$ into $X$, therefore $X$ is not \qq.

\begin{example}
\label{ejemplo3}
There exists a regular  crowded countable space $X$  such that it is  discretely generated, $SS$,  has not  countable fan-tightness and   is not \qq.
\end{example}

Let $(L_n)_n$ be a partition of $\N$ such that $|L_n|=2^n$ for all $n\in \N$ and  $\ideal$ be the ideal as defined in Example \ref{ejemplo2}. Consider the following ideal on $\N\times\N$: 
\[
\mathcal{J}=\{A\subseteq \N\times\N:\; \forall n\in\N, \{m\in\N:\; (n,m)\in A\}\in\ideal\}.
\]
It is easy to verify that $\mathcal{J}$ is neither $q^+$ nor $p^+$. 

Let $f:\N\rightarrow \N$ be an onto map  such that $f^{-1}(L_{n})=L_{n+1}$, for each $n$, and $f^{-1}(x)$ has two points for each $x\in \N$.
Let $Z=Z(\mathcal{J}^*)$. Consider $g:Z\rightarrow Z$ given by $g(n,m)=(n,f(m))$ and $g(\infty)=\infty$.  Let  $X_\infty =\underleftarrow{\lim}\{Z,g\}$. As in the previous example, let $D_{(n,m)}\su \pi^{-1}_1((n,m))$ be  a countable set without isolated points, for each $(n,m)\in \N\times\N$.  Let $X=\bigcup_{(n,m)\in\N\times\N}D_{(n,m)}\cup\{p\}$. By   Proposition \ref{embedding}, there is an embedding from $Z$ into $X$. Therefore $X$ is the required space.

\bigskip

Every countable sequential space is \qq\  (see \cite[Proposition 3.3]{Todoruzca2000}). Our last example  shows that sequentiality is not preserved by inverse limits, this was known (see \cite[3.3.E(b), p.156]{engelking}), but our example is different.

\begin{example}
\label{ejem-nosequential}
Let $Z$ be the countable sequential fan, i.e., $Z=Z(\mathcal{F})$ where $\mathcal{F}$ is the filter on $\N\times\N$ given by 
\[
A\in \mathcal{F}\;\;\Leftrightarrow\;\; \forall n\in\N , \{m\in \N: (n,m)\not \in A\}\in \fin.
\]
Let  $f:Z\rightarrow Z$ be given by $f(\infty)=\infty$ and 
\begin{equation*}
f(n,m)=\begin{cases}
(0,n) &\text{ if } m=0;\\
(n+1,m-1) &\text{ if } m\neq 0.
\end{cases}
\end{equation*}
Then $X_\infty =\underleftarrow{\lim}\{Z,f\}$ is not sequential.
\end{example}
Clearly $f$ is continuous and bijective. 
It is easy to verify that $A\in f^k(\mathcal{F})$ iff $\{m\in \N: (n,m)\not \in A\}$ is finite for all $n\geq k$. Therefore, by Proposition \ref{enlacebiyectiva},  $X_\infty=Z(\mathcal{F}_p)$ and  $\mathcal{F}_p$ is isomorphic to  $ \bigcup_k f^{k}(\mathcal{F})$ the later is clearly the dual filter of $\fin\times\fin$.  It is not difficult to verify that  $X_\infty$ is not sequential by embedding it into Arens space.

\end{document}